\documentclass{amsart}

\usepackage{amsmath,amsthm,amssymb}
\usepackage{hyperref}
\usepackage{todonotes}

\newtheorem{thm}{Theorem}[section]
\newtheorem{prop}[thm]{Proposition}
\newtheorem{lemma}[thm]{Lemma}
\newtheorem{cor}[thm]{Corollary}
\newtheorem{rmk}[thm]{Remark}

\newtheorem{openprob}[thm]{Open Problem}
\theoremstyle{definition}
\newtheorem{example}[thm]{Example}
\newtheorem{defn}[thm]{Definition}

\newcommand{\sets}{\mathsf{Set}}
\newcommand{\ords}{\mathsf{On}}
\newcommand{\zf}{\mathbf{ZF}}
\newcommand{\izf}{\mathbf{IZF}}

\newcommand{\powset}{\mathcal{P}}
\newcommand{\sym}{\operatorname{Sym}}

\newcommand{\rk}{\operatorname{rank}}
\newcommand{\im}{\operatorname{im}}
\newcommand{\tc}{\operatorname{TC}}

\newcommand{\node}{\mathtt{node}}

\title{A class of higher inductive types in Zermelo-Fraenkel set
  theory}
\author{Andrew W Swan}
\date{\today}

\begin{document}

\maketitle
\begin{abstract}
  We define a class of higher inductive types that can be constructed
  in the category of sets under the assumptions of Zermelo-Fraenkel set
  theory without the axiom of choice or the existence of uncountable
  regular cardinals. This class includes the example of unordered
  trees of any arity.
\end{abstract}

\section{Introduction}
\label{sec:introduction}

Higher inductive types are one of the key ideas in homotopy type
theory \cite{hottbook}. We think of an (ordinary) inductive type as
the smallest type closed under certain algebraic operations or
\emph{point constructors}. For instance, we define the type of
countably branching trees $T$ to be the smallest type closed under the
following operations.
\begin{align*}
  \mathtt{leaf} &: T \\
  \node &: (\omega \to T) \to T
\end{align*}
Within type theory we formalise the idea that $T$ is the smallest type
with the above point constructors using recursion or induction
terms. However, semantically, it is often more convenient to think in
terms of initial algebras. We say an \emph{algebra} for the above
constructors is a type $X$ together with a map $1 + X^\omega \to
X$. $T$ is then the initial object in the category of algebras. This
is a classic example of a \emph{$W$-type}, as defined by Moerdijk and
Palmgren \cite{moerdijkpalmgrenwtypes}.

For higher inductive types, one not only has point constructors, but
also \emph{path constructors}, which add proofs of identities of
terms. Higher inductive types are usually considered within HoTT and
have well understood semantics within models of HoTT
\cite{lumsdaineshulmanhit}, \cite{chmhitsctt},
\cite{cavalloharper}. However, since they are defined within the language of
type theory, one might also consider whether they hold in
interpretations of extensional type theory in locally cartesian closed
categories, and in particular the category of sets, $\sets$.

One of the simplest examples of higher inductive type is pushouts. In
$\sets$ these can be implemented as pushouts in the usual categorical
sense. It follows that $\sets$ contains all of the $n$-dimensional
spheres, although there is not much you can say about them without the
univalence axiom, and indeed they turn out to be trivial in
$\sets$. Quotients and image factorisations are examples of simple
colimits that play a useful role even within models of extensional
type theory \cite{maiettimodular}, \cite{awodeybauerpat}.

There are also more complicated examples of higher inductive types
that are non trivial in extensional type theory, and even $\sets$,
within the framework of \emph{quotient inductive types}
\cite{altenkirchkaposittintt}. In fact our examples of interest fall
within a smaller class with a simpler definition and clearer
semantics. This class was studied by Blass \cite{blassfreealg} under
the name \emph{free algebras subject to identities} and by Fiore,
Pitts and Steenkamp in \cite{fiorepittssteenkamp} under the name
\emph{$QW$-types} (we will refer to them by the latter name). A well
known example of such a type is that of ``unordered countably
branching trees.'' We modify the definition of $T$ above to get the
higher inductive type $T_{\sym}$ by adding equations as follows,
where we write $\sym(\omega)$ for the type of permutations
$\omega \to \omega$.
\begin{align*}
  \mathtt{leaf} &: T_{\sym} \\
  \node &: (\omega \to T_{\sym}) \to T_{\sym} \\
  \mathtt{perm} &: \prod_{f \colon \omega \to T_{\sym}}
                  \prod_{\pi : \sym(\omega)} \node(f) =
                  \node(f \circ \pi)
\end{align*}
Altenkirch, Capriotti, Dijkstra, Kraus and Forsberg include
this in \cite{acdkfqiits}, as a non trivial example of a quotient
inductive(-inductive) type. As they remark, the obvious construction
of $T_{\sym}$ as a quotient of $T$ requires the axiom of
choice.\footnote{For the special case above, countable choice would be
  enough.} Fiore, Pitts and Steenkamp showed that in fact it is an
example of a $QW$-type \cite[Example 2]{fiorepittssteenkamp}.

Blass showed that all $QW$-types can be constructed in $\sets$ under
the assumption that regular cardinals are unbounded in the class of all
ordinals. More generally free algebras can be constructed in
cocomplete categories from the existence of regular cardinals of
sufficiently high cardinality via the general techniques of Kelly
\cite{kellytransfinite}. For example this plays an important role in
the construction of higher inductive types by Lumsdaine and Shulman
\cite{lumsdaineshulmanhit}. The existence of an unbounded class of
regular ordinals is usually a reasonable one. It follows from very
weak versions of choice, such as $\mathbf{WISC}$ \cite{vdbergwisczf}
and a variant is often assumed in constructive set theory \cite[Section
10]{aczelrathjen}. It is also the case that every inaccessible
cardinal is in particular regular.\footnote{Note, however that even in the
presence of inaccessible cardinals it can be useful to have proofs that
are valid in $\zf$ without further assumption: if $\kappa$ is
inaccessible, then $V_\kappa$ is a transitive model of $\zf$, and so
any proof valid in $\zf$ can be carried out inside it (e.g. to
construct HITs that belong to $V_\kappa$), but $V_\kappa$
itself does not contain inaccessible cardinals without further
assumptions on $\kappa$.}

However, Gitik \cite{gitik1980} has constructed a model of $\zf$ in
which $\omega$ is the only regular cardinal.\footnote{under certain
  large cardinal assumptions} Moreover, Blass showed that the
assumption is strictly necessary, by constructing a $QW$-type which is
isomorphic to the collection of ordinals hereditarily of countable
cofinality, if it exists. He deduced by Gitik's result that this gives
an example of a $QW$-type that does not provably exist in $\sets$
under the assumptions of $\zf$.\footnote{Using Blass' argument and a
  later paper by Gitik \cite{gitik85} one can also show the
  following. For every successor ordinal $\alpha$ there is a transitive
  model of $\zf$ where Blass' example of a $QW$-type exists and is
  precisely the set of all ordinals less than $\omega_\alpha$.}

Fiore, Pitts and Steenkamp in loc. cit. gave an electronically
verified proof that $QW$-types can be constructed in type theory using Agda sized
types and universes closed under inductive-inductive types. We can see
from Blass' counterexample that some combination of these assumptions
for $\sets$ must lead to the existence of uncountable regular
cardinals. In a second paper \cite{fiorepittssteenkamp2} the same
authors showed that the weak choice principle $\mathbf{WISC}$ can be
used in place of Agda sized types, again verifying the result
electronically.

On the other hand, some higher inductive types can be constructed in
$\sets$ without choice or unbounded regular cardinals. In addition to
colimits, as mentioned above, the author showed in \cite{swanwtypered}
that $W$-types with reductions exist in any boolean topos, including
$\sets$. A similar argument shows that Sojakova's notion of
$W$-suspensions \cite{sojakovawsusp} also exist in all boolean
toposes.\footnote{A proof is left as an exercise for the reader.}

In this paper we will see a new class of $QW$-types that can be
constructed in $\sets$ under $\zf$, without any assumptions of choice
or existence of regular cardinals, that we call \emph{image
  preserving} $QW$-types. This will included the example of unordered
countably branching trees above, and more generally unordered trees of
any arity. The proof is based on a construction of a set of all
hereditarily countable sets due to Jech \cite{jechheredctbl}. We will
be able to recover hereditarily countable sets as a special case, and
moreover a later generalisation due to Holmes \cite{holmesheredsmall}.

\subsection*{Acknowledgements}
\label{sec:acknowledgements}

I am grateful to Jonas Frey, Simon Henry and Thomas Streicher for
helpful discussion and suggestions.

I gratefully acknowledge the support of the Air Force Office of
Scientific Research through MURI grant FA9550-15-1-0053. Any opinions,
findings and conclusions or recommendations expressed in this material
are those of the authors and do not necessarily reflect the views of
the AFOSR.

\section{Image preserving $QW$-types}

We now define our class of higher inductive types that we will
construct in $\sets$. It will be clear by the definition that this is a
special case of $QW$-types \cite{fiorepittssteenkamp}.

\begin{defn}
  A $\emph{polynomial}$ is a set $A$ together with a family of sets
  $(B_a)_{a \in A}$. We will refer to elements of $A$ as
  \emph{constructors} and say $B_a$ is the \emph{arity} of the
  constructor $a \in A$.
\end{defn}

\begin{defn}
  Given a polynomial $(B_a)_{a \in A}$, an \emph{algebra} is a set $X$
  together with a function $s \colon \sum_{a \in A} X^{B_a} \to X$. We
  refer to such $s$ as an \emph{algebra structure} on $X$.

  If $(X, s)$ and $(Y, t)$ are algebras, we say a \emph{homomorphism}
  is a function $h : X \to Y$ such that for all $a \in A$ and $f :
  B_a \to X$ we have $h(s(a, f)) = t(a, h \circ f)$.
\end{defn}

\begin{rmk}
  Although we assumed $X$ and $Y$ are sets in the definition above, we
  can also define algebra structures and homomorphisms for classes by
  replacing functions with class functions.
\end{rmk}

\begin{defn}
  Given a polynomial $(B_a)_{a \in A}$, an \emph{image preserving
    equation} over $(B_a)_{a \in A}$ consists of a set $V$, and
  $a, b \in A$ together with $l \colon B_a \to V$ and
  $r \colon B_{b} \to V$ such that the image of $l$ is equal to the
  image of $r$.

  A \emph{family of image preserving equations} consists of a set $E$
  together with a family of image preserving equations $(V_e, a_e,
  b_e, l_e, r_e)_{e \in E}$.
\end{defn}

\begin{defn}
  Given a polynomial $(B_a)_{a \in A}$ and a family of image
  preserving equations, $(V_e, a_e, b_e, l_e, r_e)_{e \in E}$, an
  \emph{algebra} is an algebra $(X, s)$ for the polynomial
  $(B_a)_{a \in A}$ such that for every $e \in E$ and every function
  $h \colon V_e \to X$ we have
  $s(a_e, h \circ l_e) = s(b_e, h \circ r_e)$.
\end{defn}

\begin{example}
  Suppose we are given a set $B$. We consider the polynomial with two
  constructors of arity $0$ and $B$. We consider the set of image
  preserving equations with set of variables $B$, and
  $l, r \colon B \to B$ defined by $l = 1_B$ and $r = \pi$ for each
  permutation $\pi \in \sym(B)$. The initial algebra is then the set
  of \emph{unordered trees of arity $B$}. In particular, we can take
  $B = \omega$ to get unordered countably branching trees.
\end{example}

\begin{example}
  Given any polynomial $(B_a)_{a \in A}$ and set $V$ we can consider the set of
  \emph{all} image preserving equations. We will see in the next
  section how this allows us to recover hereditarily countable sets
  and more generally hereditarily small sets.
\end{example}

\begin{example}
  \label{ex:nonexblass}
  We will be able to deduce from the main theorem that Blass' example
  of a $QW$-type that cannot be constructed in $\zf$ cannot be viewed
  as an image preserving $QW$-type. However, for illumination we will
  give a more intuitive direct reason why it does not satisfy the
  definition. In Blass' example, the initial algebra is expected to
  behave like the collection of all ordinals of countable cofinality. In
  particular there is an operation $\sup$ which takes a sequence
  $(\alpha_n)_{n < \omega}$ and is expected to behave like the
  supremum of the countable sequence of ordinals
  $(\alpha_n)_{n < \omega}$. In particular we should identify
  $\sup((\alpha_n)_{n < \omega})$ and $\sup((\beta_n)_{n < \omega})$
  whenever $\alpha_n$ is cofinal in $\beta_n$ and vice versa. However,
  this is much weaker than $\alpha_n$ and $\beta_n$ containing exactly
  the same elements (possibly in a different order).
\end{example}

\section{Hereditarily small sets}
\label{sec:hered-small-sets}

As part of \cite{jechheredctbl} Jech showed how to construct the set
of all hereditarily countable sets in $\zf$. This was later
generalised by Holmes \cite{holmesheredsmall} who showed that for any
set $B$ we can construct a set containing all sets hereditarily with
cardinality less than or equal to $B$. We in fact give a very slight
further generalisation of Holmes' result by instead considering
families of sets $(B_a)_{a \in A}$. In order to derive this from the
main theorem \ref{thm:iptypesexist}, we first need to clarify the
relation between image preserving equations and the class of
hereditarily small sets, which is the topic of this section.

\begin{defn}
  Given a family of sets $(B_a)_{a \in A}$ we say a set $X$ is
  \emph{small relative to $(B_a)_{a \in A}$} if for some
  $a \in A$ there exists a surjection $B_a \twoheadrightarrow X$. We
  say $X$ is \emph{hereditarily small relative to $(B_a)_{a \in A}$}
  if $X$ is small relative to $(B_a)_{a \in A}$ and all of its
  elements are hereditarily small relative to $(B_a)_{a \in A}$.
  We write $H((B_a)_{a \in A})$ for the class of hereditarily small
  sets.

  We view $H((B_a)_{a \in A})$ as an algebra on the polynomial
  $(B_a)_{a \in A}$ as follows. Given $a \in A$ and
  $f : B_a \to H((B_a)_{a \in A})$, we define $s(a, f)$ to be
  $\{ f(b) \;|\; b \in B_a \}$.
\end{defn}

\begin{lemma}
  \label{lem:heredsmsatip}
  $H((B_a)_{a \in A})$ satisfies all image preserving equations
  relative to the polynomial $(B_a)_{a \in A}$.
\end{lemma}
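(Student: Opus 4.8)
The plan is to unwind the definitions; the content of the lemma is that the algebra structure on $H((B_a)_{a \in A})$ was set up precisely so as to validate image preserving equations.

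Fix an image preserving equation $(V, a, b, l, r)$, so that $l \colon B_a \to V$ and $r \colon B_b \to V$ with $\im(l) = \im(r)$, and fix a class function $h \colon V \to H((B_a)_{a \in A})$. I must check that $s(a, h \circ l) = s(b, h \circ r)$. By the definition of the algebra structure on $H((B_a)_{a \in A})$, the left-hand side is the set $\{\, h(l(x)) \mid x \in B_a \,\}$ and the right-hand side is the set $\{\, h(r(y)) \mid y \in B_b \,\}$; both are genuine sets by Replacement, since $B_a$ and $B_b$ are sets and $h$ is only being applied to elements of the set $V$.

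The key observation is that for any function $k \colon C \to V$ with $C$ a set, the set $\{\, h(k(x)) \mid x \in C \,\}$ depends only on the subset $\im(k) \subseteq V$: since $k$ corestricts to a surjection $C \twoheadrightarrow \im(k)$, we have $\{\, h(k(x)) \mid x \in C \,\} = \{\, h(v) \mid v \in \im(k) \,\}$. Applying this with $k = l$ and with $k = r$ gives $s(a, h \circ l) = \{\, h(v) \mid v \in \im(l) \,\}$ and $s(b, h \circ r) = \{\, h(v) \mid v \in \im(r) \,\}$. Since $\im(l) = \im(r)$ by the hypothesis that the equation is image preserving, these two sets coincide, which is exactly what was to be shown.

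There is no real obstacle in this argument: it is immediate once one notes that the algebra structure on $H((B_a)_{a \in A})$ sends a tuple to its set of values, and that the set of values of $h \circ l$ is determined by $\im(l)$ alone. The only minor points worth mentioning are that all the images appearing are sets rather than proper classes (clear, since $h$ is restricted to the sets $B_a$, $B_b$, $V$) and that the algebra structure on the proper class $H((B_a)_{a \in A})$ is well defined in the sense of the earlier remark on class algebras; both are routine. This also explains the terminology: the image preserving equations are precisely those equations automatically validated by the ``set of values'' algebra structure on hereditarily small sets, which is what makes the construction of the next section possible.
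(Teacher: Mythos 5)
Your proof is correct and follows essentially the same route as the paper's: both reduce $s(a, h \circ l)$ to $\{\, h(v) \mid v \in \im(l)\,\}$, observe this depends only on the image of $l$, and conclude from $\im(l) = \im(r)$. The extra remarks about Replacement and class algebras are fine but not needed beyond what the paper records.
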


\begin{proof}
  Suppose we are given a set $S$ of variables together with $l : B_a
  \to S$ and $r : B_c \to S$ and a map $h : S \to
  H((B_a)_{a \in A})$. We calculate as follows.
  \begin{align*}
    s(h \circ l) &= \{ h(l(b)) \;|\; b \in B_a \} \\
                 &= \{ h(x) \;|\; x \in \im(l) \} \\
                 &= \{ h(x) \;|\; x \in \im(r) \} \\
                 &= s(h \circ r)
  \end{align*}
\end{proof}

We now fix a set of variables $S := \sum_{a \in A} \powset(B_a)$.

\begin{lemma}
  \label{lem:outofheredsm}
  Suppose $(X, t)$ is an algebra for $(B_a)_{a \in A}$ that satisfies
  all image preserving equations for the set of variables $S$
  above. Then there is a unique homomorphism
  $h : H((B_a)_{a \in A}) \to X$.
\end{lemma}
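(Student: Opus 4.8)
The plan is to define $h$ by $\in$-recursion on $H((B_a)_{a\in A})$ and then check separately that it is well-defined, that it is a homomorphism, and that it is unique. For the recursion, given $y\in H((B_a)_{a\in A})$ we know $y$ is small relative to $(B_a)_{a\in A}$, so there is some $a\in A$ and a surjection $g\colon B_a\twoheadrightarrow y$. Assuming $h$ has already been defined on all elements of $y$ (which are again hereditarily small), we would like to set $h(y) := t(a, h\circ g)$. The first and main obstacle is that this recipe depends on the choice of $a$ and of the surjection $g$, and we have no choice principle available to pick them uniformly. The key idea — and the reason the set of variables is taken to be $S = \sum_{a\in A}\powset(B_a)$ — is that the image preserving equations over $S$ are exactly what is needed to show the recipe is independent of these choices: given two pairs $(a,g)$ and $(c,g')$ with $g\colon B_a\twoheadrightarrow y$ and $g'\colon B_c\twoheadrightarrow y$, we feed the relevant equation into $(X,t)$ to conclude $t(a,h\circ g) = t(c,h\circ g')$.

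Concretely, I would carry out the following steps. First, make the recursion literally well-founded: since every $y\in H((B_a)_{a\in A})$ has all elements hereditarily small, $\in$ is well-founded on $H((B_a)_{a\in A})$ and we may define a class function $h$ by $\in$-recursion, where at stage $y$ we have $h$ available on $\bigcup\{z : z\in y\}$, i.e. on all elements of $y$. Second, to define $h(y)$ without choice, consider the set $H_y := \{\, t(a, h\circ g) \;:\; a\in A,\ g\colon B_a\twoheadrightarrow y \,\}$; this is a set by replacement, and I claim it is a singleton. It is nonempty because $y$ is small. To see it has at most one element, take $a, c\in A$ and surjections $g\colon B_a\twoheadrightarrow y$, $g'\colon B_c\twoheadrightarrow y$. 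Since $g, g'$ are surjections onto the same set $y$, we can build an image preserving equation with variable set $S$: regard $y$ as (coded by) an element of $\powset(B_a)$ and of $\powset(B_c)$ — more carefully, use the corresponding elements of $S = \sum_{a\in A}\powset(B_a)$ — with $l$ essentially $g$ and $r$ essentially $g'$, so that $\im(l) = y = \im(r)$. Precomposing with $h$ (restricted to $y$, then composed with the inclusion into $H((B_a)_{a\in A})$) gives a map into $X$, and the equation holding in $(X,t)$ yields $t(a, h\circ g) = t(c, h\circ g')$. Hence $H_y$ is a singleton and we define $h(y)$ to be its unique element.

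Third, check $h$ is a homomorphism: given $a\in A$ and $f\colon B_a\to H((B_a)_{a\in A})$, the element $s(a,f)$ is $\{f(b) : b\in B_a\}$, and $f$ itself is a surjection $B_a\twoheadrightarrow s(a,f)$, so by construction $h(s(a,f)) = t(a, h\circ f)$, exactly the homomorphism equation. Fourth, uniqueness: if $h'$ is another homomorphism $H((B_a)_{a\in A})\to X$, prove $h = h'$ by $\in$-induction — for $y\in H((B_a)_{a\in A})$ pick any $a$ and surjection $g\colon B_a\twoheadrightarrow y$ (existence, not choice, suffices since we are proving an equality, a proposition), and then $h'(y) = h'(s(a,g)) = t(a, h'\circ g) = t(a, h\circ g) = h(y)$ using the induction hypothesis on the elements of $y$. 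The only subtle point throughout is the singleton argument in step two, which is where the specific choice of $S$ and the image preserving hypothesis are used; everything else is routine transfinite recursion in $\zf$.
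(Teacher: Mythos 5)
Your overall strategy coincides with the paper's: define $h$ by $\in$-recursion, and use an image preserving equation over $S=\sum_{a\in A}\powset(B_a)$ to show that the candidate value $t(a,h\circ g)$ is independent of the chosen surjection onto $y$ (your ``singleton set $H_y$'' is just a repackaging of the paper's well-definedness of the recursive equation). The homomorphism and uniqueness steps are fine. However, the one step you yourself identify as the crux --- actually exhibiting the instance of an image preserving equation --- is exactly where your sketch breaks down. You take $l\colon B_a\to S$ ``essentially $g$'' and $r\colon B_c\to S$ ``essentially $g'$'' and assert $\im(l)=y=\im(r)$. This does not typecheck: $l$ and $r$ land in $S$, not in $y$, and what is required is that their images agree \emph{as subsets of $S$}. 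If you code each surjection through its own component of the sum, say $l(b)=(a,g^{-1}(g(b)))$ and $r(b)=(c,(g')^{-1}(g'(b)))$, then $\im(l)\subseteq\{a\}\times\powset(B_a)$ while $\im(r)\subseteq\{c\}\times\powset(B_c)$, and these need not be equal (they are disjoint whenever $a\neq c$). The repair, which is precisely what the paper's proof does, is to code \emph{both} maps through the fibers of a single surjection: $l(b):=(a,g^{-1}(g(b)))$ and $r(b):=(a,g^{-1}(g'(b)))$, so that both images equal $\{(a,g^{-1}(z))\;|\;z\in y\}$.

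A second, related omission: the map you feed into the equation, ``$h$ restricted to $y$ composed with the inclusion,'' is a map $y\to X$, whereas the equation quantifies over maps $S\to X$. You must transport $h$ along the coding and then extend to all of $S$ by some default value $x_0\in X$, and such an $x_0$ need not exist in general. The paper handles this by observing that if every $B_a$ is inhabited then $H((B_a)_{a\in A})$ is empty and the lemma is vacuous, so one may assume some $B_a$ is empty, which supplies a canonical element $x_0$ of $X$. Neither point is difficult, but together they constitute essentially all the non-routine content of the lemma, and your proposal does not get through them as written.
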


\begin{proof}
  In order for $h$ to be a homomorphism, we need it to satisfy the
  following equation whenever $a \in A$ and $f : B_a \to H((B_a)_{a
    \in A})$.
  \begin{equation*}
    h(\{ f(b) \;|\; b \in B_a \}) = t(a, h \circ f)
  \end{equation*}

  This defines a unique class function by $\in$-induction as long as
  the equation above is well defined. That is, given $g : B_c \to H((B_a)_{a
    \in A})$ such that $\{ f(b) \;|\; b \in B_a \} = \{ g(b) \;|\; b
  \in B_c \}$ we need $t(a, h \circ f) = t(c, h \circ g)$.

  Note that the lemma is trivial if $B_a$ is inhabited for each $a$,
  since then $H((B_a)_{a \in A})$ is empty. Hence we may assume $B_a$
  is empty for some $a$ and so $X$ has a canonical element $x_0$. Now
  fix $a, c \in A$ and $f$ and $g$ as above. By induction on rank, we
  may assume $h(y)$ is already uniquely defined for
  $y \in \{ f(b) \;|\; b \in B_a \}$. We define a map $k : S \to X$ as
  follows.
  \begin{equation*}
    k(a', C) =
    \begin{cases}
      h(y) & a' = a, C = f^{-1}(y) \text{ for unique } y \in
      \{ f(b) \;|\; b \in B_a \} \\
      x_0 & \text{otherwise}
    \end{cases}
  \end{equation*}

  We next define $l : B_a \to S$ and $r : B_c \to S$ as follows.
  \begin{align*}
    l(b) &:= (a, \{ b' \in B_a \;|\; f(b') = f(b) \}) \\
    r(b) &:= (a, \{ b' \in B_a \;|\; f(b') = g(b) \})
  \end{align*}

  Finally, it is straightforward to verify that $l$ and $r$ have the
  same image in $S$, and that $k \circ l = h \circ f$
  and $k \circ r = h \circ g$. Hence it follows from the image
  preserving equation $t(a, k \circ l) = t(c, k \circ r)$ that
  $t(a, h \circ f) = t(c, h \circ g)$ as required.
\end{proof}

\begin{thm}
  If $H((B_a)_{a \in A})$ is a set, then it is the $QW$-type for the
  polynomial $(B_a)_{a \in A}$ with all image preserving equations for
  $S$.
\end{thm}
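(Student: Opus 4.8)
The plan is to unwind the definition of $QW$-type — the initial object in the category of algebras for the polynomial $(B_a)_{a \in A}$ that satisfy the prescribed family of equations — and to observe that both halves of that universal property have essentially already been established, in Lemmas \ref{lem:heredsmsatip} and \ref{lem:outofheredsm} respectively. What remains is only to note that the standing hypothesis, that $H((B_a)_{a \in A})$ is a set, upgrades the relevant class-level statements to set-level ones.

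First I would record that, once $H((B_a)_{a \in A})$ is a set, the assignment $s(a,f) = \{ f(b) \mid b \in B_a \}$ really is a function $\sum_{a \in A} H((B_a)_{a \in A})^{B_a} \to H((B_a)_{a \in A})$: for $f : B_a \to H((B_a)_{a \in A})$ the set $\{ f(b) \mid b \in B_a \}$ is small relative to $(B_a)_{a \in A}$ via the surjection $B_a \twoheadrightarrow \{ f(b) \mid b \in B_a \}$ induced by $f$, and all of its elements, being of the form $f(b)$, already lie in $H((B_a)_{a \in A})$ and hence are hereditarily small; so $s(a,f) \in H((B_a)_{a \in A})$. Thus $(H((B_a)_{a \in A}), s)$ is an algebra for $(B_a)_{a \in A}$ in the strict sense, and by Lemma \ref{lem:heredsmsatip} it satisfies every image preserving equation over $(B_a)_{a \in A}$, in particular every image preserving equation with variable set $S$. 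So $H((B_a)_{a \in A})$ is an object of the category whose initial object we are claiming it to be.

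For initiality, given any algebra $(X,t)$ for $(B_a)_{a \in A}$ satisfying all image preserving equations for the variable set $S = \sum_{a \in A} \powset(B_a)$, this is precisely the hypothesis of Lemma \ref{lem:outofheredsm}, which supplies a unique homomorphism $h : H((B_a)_{a \in A}) \to X$; since $H((B_a)_{a \in A})$ is a set, the class function produced by that lemma is an honest set-theoretic function, and its uniqueness is unaffected. Hence $H((B_a)_{a \in A})$ admits a unique homomorphism into every such algebra, i.e.\ it is the $QW$-type for the polynomial with all image preserving equations for $S$. I expect no genuine obstacle here: the substantive work has been isolated into the two preceding lemmas, and the only points that need care are the passage from class functions to set functions and the observation that the variable set $S$ was fixed exactly so that Lemma \ref{lem:outofheredsm} applies with no larger collection of equations needed.
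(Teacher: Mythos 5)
Your proposal is correct and matches the paper's proof, which likewise cites Lemmas \ref{lem:heredsmsatip} and \ref{lem:outofheredsm} as immediately giving the result; your extra remarks on closure of $H((B_a)_{a \in A})$ under $s$ and on class functions becoming set functions are just the details the paper leaves implicit.
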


\begin{proof}
  This follows directly from lemmas \ref{lem:heredsmsatip} and
  \ref{lem:outofheredsm}.
\end{proof}
  
\begin{thm}
  If the $QW$-type for the polynomial $(B_a)_{a \in A}$ with all image
  preserving equations for $S$ exists, then $H((B_a)_{a \in A})$ is a
  set.
\end{thm}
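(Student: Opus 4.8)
Write $Q$ for the $QW$-type in question. The plan is to turn the hypothesis into a homomorphism $h \colon H((B_a)_{a\in A}) \to Q$ and then show it is injective; since an injective class function with codomain a set has a set for its domain (by Separation and Replacement), this is precisely what is needed. The map $h$ comes for free: $Q$, being the $QW$-type for the polynomial with all image preserving equations for $S$, is in particular an algebra satisfying those equations, so Lemma~\ref{lem:outofheredsm} provides $h$. We may assume $H((B_a)_{a\in A})$ is inhabited, as otherwise there is nothing to prove; looking at an element of least rank then shows $\emptyset \in H((B_a)_{a\in A})$, so some $B_a$ is empty and $\emptyset \in H_\gamma$ for every $\gamma \ge 1$, where $H_\gamma := H((B_a)_{a\in A}) \cap V_\gamma = \{X \in H((B_a)_{a\in A}) : \rk X < \gamma\}$.

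The substantive step is injectivity, and the idea is to use the \emph{initiality} of $Q$, not merely the existence of a map out of it. For each ordinal $\gamma \ge 1$, equip the set $H_\gamma$ (a set by Separation) with the \emph{clipped} algebra structure
\[
  t_\gamma(a, f) \;:=\;
  \begin{cases}
    \im f & \text{if } \rk(\im f) < \gamma,\\
    \emptyset & \text{otherwise.}
  \end{cases}
\]
This is well defined with values in $H_\gamma$: in the first case $\im f$ is small via $f$ and hereditarily small since its elements lie in $H_\gamma$, so $\im f \in H_\gamma$; in the second case $\emptyset \in H_\gamma$. Moreover $(H_\gamma, t_\gamma)$ satisfies every image preserving equation for $S$ (indeed for any set of variables): if $\im l = \im r$ and $u \colon S \to H_\gamma$, then $\im(u \circ l) = u[\im l] = u[\im r] = \im(u \circ r)$, so the two sides of the equation are literally the same set, clipped in the same way. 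Hence, since $Q$ is initial among such algebras, there is a homomorphism $\psi_\gamma \colon Q \to H_\gamma$.

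I would then show, by $\in$-induction, that $\psi_\gamma(h(X)) = X$ for every $X \in H((B_a)_{a\in A})$ with $\rk X < \gamma$. Fixing such an $X$, choose a surjection $f \colon B_a \twoheadrightarrow X$, so $X = \{f(b) : b \in B_a\}$; then $h(X) = t_Q(a, h \circ f)$, where $t_Q$ is the algebra structure of $Q$, and applying the homomorphism $\psi_\gamma$ gives $\psi_\gamma(h(X)) = t_\gamma(a, \psi_\gamma \circ h \circ f)$. Each $f(b)$ lies in $X$ and hence has rank $< \rk X < \gamma$, so the inductive hypothesis yields $\psi_\gamma \circ h \circ f = f$; and since $\rk(\im f) = \rk X < \gamma$ we get $t_\gamma(a, f) = \im f = X$. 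Thus $\psi_\gamma \circ h$ restricts to the identity on $H_\gamma$, so $h$ is injective on $H_\gamma$; as $\gamma$ is arbitrary and $H((B_a)_{a\in A}) = \bigcup_{\gamma} H_\gamma$, the map $h$ is injective, and so $H((B_a)_{a\in A})$ is a set.

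The delicate point — and the reason one cannot simply exhibit a set-sized subalgebra of $H((B_a)_{a\in A})$ and apply initiality directly — is that $H_\gamma$ is \emph{not} closed under forming images inside $H((B_a)_{a\in A})$: a map $f \colon B_a \to H_\gamma$ can have $\im f$ of rank exactly $\gamma$, and without assuming a regular cardinal above each $|B_a|$ there is no way to avoid this by enlarging $\gamma$. Clipping by rank sidesteps the problem at no cost, keeping the image preserving equations valid, and then the mere existence of $Q$ does the rest, bounding the ranks occurring in $H((B_a)_{a\in A})$. Everything else — that the structure maps of $H((B_a)_{a\in A})$ and $Q$, together with $\psi_\gamma$ and $h$, are homomorphisms, well-definedness of the representation $X = \{f(b) : b \in B_a\}$, and the concluding Replacement argument — is routine.
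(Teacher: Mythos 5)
Your proof is correct and uses essentially the same device as the paper: the rank--truncated algebras $H((B_a)_{a\in A}) \cap V_\gamma$ equipped with a clipped structure map (the paper clips to an added basepoint in $(H((B_a)_{a\in A})\cap V_\gamma)+1$, you clip to $\emptyset$), which still satisfy all image preserving equations, so that initiality of the $QW$-type supplies homomorphisms into them. The only organizational difference is that the paper glues these maps into a single homomorphism $Q \to H((B_a)_{a\in A})$ and checks it is inverse to $h$, whereas you use each $\psi_\gamma$ only as a retraction of $h$ on the $\gamma$-truncation to conclude injectivity and then invoke Separation and Replacement, which avoids the coherence check between stages and the induction over $Q$ showing every element is eventually defined.
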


\begin{proof}
  Let $(X, t)$ be the $QW$-type.

  We first define a homomorphism from $X$ to $H((B_a)_{a \in A})$. The
  essential idea is to use the fact that $X$ is an initial
  algebra. However, formally speaking we can only apply initiality of
  $X$ once we know that $H((B_a)_{a \in A})$ is an algebra and in
  particular a set, which we don't have a priori. Hence we use a trick
  of approximating $H((B_a)_{a \in A})$ by a sequence of sets.
  
  For each ordinal $\alpha$ we can define an algebra
  structure on the set $(H((B_a)_{a \in A}) \cap V_\alpha) + 1$ that
  agrees with the structure $s$ on $H((B_a)_{a \in A})$ when $s(a, f)$
  belongs to $H((B_a)_{a \in A}) \cap V_\alpha$ and otherwise sends
  $(a, f)$ to the $1$ component. Note furthermore that this
  operation satisfies all image preserving equations by lemma
  \ref{lem:heredsmsatip}. Hence there is a unique homomorphism
  $h_\alpha : X \to (H((B_a)_{a \in A}) \cap V_\alpha) + 1$. Say that
  $x \in X$ is \emph{defined at $\alpha$} if
  $h(x) \in H((B_a)_{a \in A}) \cap V_\alpha$ and undefined
  otherwise. Note that for $\beta \geq \alpha$ the canonical
  restriction map
  $(H((B_a)_{a \in A}) \cap V_\beta) + 1 \to (H((B_a)_{a \in A}) \cap
  V_\alpha) + 1$ is a homomorphism. It follows that if $x$ is defined
  at $\alpha$, then it is also defined at $\beta$ and
  $h_\beta(x) = h_\alpha(x)$. By induction every element $x$ is
  defined at $\alpha$ for some ordinal $\alpha$. Hence this defines a
  unique homomorphism $X \to H((B_a)_{a \in A})$. The homomorphism has
  an inverse by lemma \ref{lem:outofheredsm}. Hence $H((B_a)_{a \in
    A})$ is in bijection with a set, and so a set itself.
\end{proof}

\section{Some useful propositions}
\label{sec:some-usef-prop}

We recall some basic classical set theory that will be useful for the
construction of image preserving $QW$-types.
We fill in some of the details, with the remainder left as an
exercise for the reader.

\begin{prop}
  For any ordinals $0 < \alpha < \beta$, there is a canonical
  surjection $\beta \twoheadrightarrow \alpha$.

  If there is a surjection $X \twoheadrightarrow \beta$ for some set
  $X$, then there is also a surjection $X \twoheadrightarrow \alpha$.
\end{prop}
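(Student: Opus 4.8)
The plan is to write the surjection down explicitly. Since $\alpha < \beta$ we have $\alpha \subseteq \beta$, and since $0 < \alpha$ we have $0 \in \alpha$. I would define $p \colon \beta \to \alpha$ by $p(\gamma) := \gamma$ if $\gamma \in \alpha$ and $p(\gamma) := 0$ otherwise. This is well defined: the two clauses are exhaustive and disjoint, the first clause lands in $\alpha$ by construction, and the second lands in $\alpha$ precisely because $0 \in \alpha$. It is a surjection because $p(\delta) = \delta$ for every $\delta \in \alpha$; equivalently, $p$ is the retraction of the inclusion $\alpha \subseteq \beta$ that collapses $\beta \setminus \alpha$ to $0$. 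Since this definition involves no arbitrary choices and is uniform in $\alpha$ and $\beta$, it is canonical in the intended informal sense, and in particular it is available in $\zf$ with no appeal to choice.

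For the second statement, suppose we are given a surjection $f \colon X \twoheadrightarrow \beta$. Then the composite $p \circ f \colon X \to \alpha$ is again a surjection, being a composite of surjections, which is exactly what is required.

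I do not expect any genuine obstacle here; the proof is routine and amounts to recording the well-definedness checks above. The one point worth flagging is that the hypothesis $0 < \alpha$ is really used: it is what makes the ``otherwise'' branch of $p$ land somewhere, and, correspondingly, there is no surjection onto $\alpha = 0$ from the nonempty ordinal $\beta$ (hence none from $X$ in the second part).
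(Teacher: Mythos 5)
Your proof is correct and is exactly the standard argument the paper intends here (the paper leaves this proposition as an exercise): collapse $\beta \setminus \alpha$ to $0$ using $0 \in \alpha$, and compose with the given surjection for the second part. Nothing to add.
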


\begin{prop}
  For any well ordered set $(X, <)$ (and in particular for sets of
  ordinals ordered by $\in$), there is a unique ordinal $\beta$ with a
  unique order isomorphism $(X, <) \cong (\beta, \in)$. We refer to
  $\beta$ as the \emph{order type} of $(X, <)$.
\end{prop}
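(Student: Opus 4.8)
The plan is to obtain the isomorphism by transfinite recursion along $(X,<)$ and then deduce uniqueness from the rigidity of ordinals. For existence, define a class function $f$ on $X$ by the recursion $f(x) = \{\, f(y) \;|\; y < x \,\}$; this is legitimate since $<$ is a well-order, and by Replacement the image $\beta := \{\, f(x) \;|\; x \in X \,\}$ is a set. First I would check by induction along $<$ that each $f(x)$ is an ordinal: it is a set of ordinals by the induction hypothesis, and it is transitive because $y < x$ implies $f(y) = \{\, f(z) \;|\; z < y \,\} \subseteq \{\, f(z) \;|\; z < x \,\} = f(x)$, using transitivity of $<$. Next, $f$ is strictly order-preserving, immediately from the definition ($y < x$ gives $f(y) \in f(x)$), and injective, since if $y \neq x$ then one of the two, say $y$, precedes the other, whence $f(y) \in f(x)$ and so $f(y) \neq f(x)$ because no ordinal is a member of itself. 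Finally $\beta$ is transitive --- any member of some $f(x)$ is $f(y)$ for some $y < x$, hence lies in $\beta$ --- so $\beta$ is an ordinal, and $f \colon (X,<) \to (\beta,\in)$ is an order isomorphism, being surjective onto $\beta$ by construction.

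For uniqueness it suffices to show that any order isomorphism $e \colon (\gamma,\in) \to (\gamma',\in)$ between ordinals is the identity function, and in particular $\gamma = \gamma'$: for then, given two order isomorphisms $g \colon (X,<) \to (\gamma,\in)$ and $g' \colon (X,<) \to (\gamma',\in)$, the composite $g' \circ g^{-1}$ is such an $e$, forcing $\gamma = \gamma'$ and $g = g'$. This claim I would prove by $\in$-induction: suppose $e(\epsilon) = \epsilon$ for all $\epsilon \in \delta$, where $\delta \in \gamma$. Since $e$ is an order isomorphism it restricts to a bijection from $\{\, \epsilon \in \gamma \;|\; \epsilon < \delta \,\} = \delta$ onto $\{\, \eta \in \gamma' \;|\; \eta < e(\delta) \,\} = e(\delta)$; combined with the induction hypothesis this gives $e(\delta) = \{\, e(\epsilon) \;|\; \epsilon \in \delta \,\} = \delta$.

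There is no genuine obstacle in this proposition; the only points that require care are the appeal to the recursion theorem for well-founded relations to define $f$, and the use of Replacement to see that $\beta$ is a set rather than a proper class --- both routine in $\zf$. The special case of a set of ordinals ordered by $\in$ is then immediate, and we call the ordinal $\beta$ so obtained the \emph{order type} of $(X,<)$.
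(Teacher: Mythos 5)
Your proof is correct. The paper states this proposition without proof (it is one of the ``basic classical set theory'' facts explicitly left as an exercise for the reader), and your argument --- defining $f(x)=\{f(y)\mid y<x\}$ by recursion, checking that its image is a transitive set of ordinals, and deducing uniqueness from the rigidity of order isomorphisms between ordinals --- is exactly the standard argument the paper implicitly relies on.
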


\begin{prop}
  For any family of sets $(X_i)_{i \in I}$, there is an ordinal
  $\aleph((X_i)_{i \in I})$ which is the smallest for which there is
  no surjection $X_i \twoheadrightarrow \aleph((X_i)_{i \in I})$ for
  any $i \in I$. It is precisely the set of all ordinals $\alpha$ for
  which there is a surjection $X_i \twoheadrightarrow \alpha$ for some
  $i \in I$.
\end{prop}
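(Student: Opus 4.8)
The plan is to recognise $\aleph((X_i)_{i \in I})$ as a Hartogs-style ordinal for surjections (a ``Lindenbaum number'' of the family). Set
\[
  C := \{\, \alpha \in \ords \mid \text{there exist } i \in I \text{ and a surjection } X_i \twoheadrightarrow \alpha \,\}.
\]
I will show that $C$ is a set, that it is $\in$-downward closed and hence is itself an ordinal, and that this ordinal has exactly the asserted universal property; granting the first point, the remaining two are short.

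The crucial step is that $C$ is a set, and the point is to do this without choosing sections of surjections. Fix $i \in I$ and a surjection $f \colon X_i \twoheadrightarrow \alpha$. The set of fibres $P_f := \{\, f^{-1}(\{\gamma\}) \mid \gamma < \alpha \,\}$ is a subset of $\powset(X_i)$, and since $f$ is surjective the assignment $\gamma \mapsto f^{-1}(\{\gamma\})$ is a bijection $\alpha \to P_f$; transporting the ordinal order of $\alpha$ along it gives a well-ordering of $P_f$ of order type $\alpha$. Hence every $\alpha$ with a surjection $X_i \twoheadrightarrow \alpha$ is the order type of a well-ordering of some element of $\powset(\powset(X_i))$. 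The pairs $(P, {<})$ with $P \in \powset(\powset(X_i))$ and ${<}$ a well-ordering of $P$ form a subclass of the set $\powset(\powset(X_i)) \times \powset(\powset(X_i) \times \powset(X_i))$, hence a set; applying Replacement with the order-type operation (justified by the proposition on order types above) shows that $C_i := \{\, \alpha \in \ords \mid \text{there is a surjection } X_i \twoheadrightarrow \alpha \,\}$ is a set. A further application of Replacement over $i \in I$ and then Union shows $C = \bigcup_{i \in I} C_i$ is a set.

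Now $C$ is a set of ordinals, so to see it is an ordinal it suffices to check it is $\in$-downward closed. If $\alpha \in C$, witnessed by a surjection $X_i \twoheadrightarrow \alpha$, and $0 < \beta < \alpha$, then the first proposition of this section gives a surjection $X_i \twoheadrightarrow \beta$, so $\beta \in C$; and $0 \in C$ as well, trivially if some $X_i$ is empty (which holds in all our intended applications, where there is a nullary constructor) and in general under the convention that every set maps onto $\emptyset$. Thus $C$ is a transitive set of ordinals, i.e.\ an ordinal, and we set $\aleph((X_i)_{i \in I}) := C$. There is then no surjection $X_i \twoheadrightarrow \aleph((X_i)_{i \in I})$ for any $i$, since otherwise $\aleph((X_i)_{i \in I}) \in C = \aleph((X_i)_{i \in I})$; and every $\alpha < \aleph((X_i)_{i \in I})$ lies in $C$, hence is a surjective image of some $X_i$. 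This is precisely the required statement.

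The only real obstacle is the set-hood of $C$ in the absence of choice. The naive approach --- choose a surjection for each $\alpha \in C_i$ and bound the resulting ordinals --- is unavailable, and the fix is to replace each surjection by its canonically defined set of fibres, which lives in a fixed iterated power set of $X_i$; after that, everything reduces to Replacement, Union, and the two preceding propositions.
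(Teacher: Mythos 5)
Your proposal is correct and follows essentially the same route as the paper: both arguments make the ordinal set-sized by replacing each surjection $X_i \twoheadrightarrow \alpha$ with canonical data in an iterated power set of $X_i$ (your set of fibres is exactly the paper's quotient $X_i/{\sim}$ by the kernel equivalence relation) equipped with a well-ordering of order type $\alpha$, and then apply Replacement on order types, downward closure, and the Burali-Forti observation that the resulting ordinal cannot contain itself. Your write-up is somewhat more explicit about the appeals to Replacement, Union, and the case $\alpha = 0$, but there is no substantive difference.
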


\begin{proof}
  Note that whenever $X_i \twoheadrightarrow \alpha$, there is an
  equivalence relation $\sim$ on $X$, and a well ordering $<$ on
  $X/{\sim}$ such that the order type of $(X/{\sim}, <)$ is
  $\alpha$. However there is clearly a set of such well orders by
  power set, and so there is a set of all such ordinals
  $\alpha$. Since this is a downwards closed set of ordinals, it is an
  ordinal itself. Since the set cannot contain itself, it is the
  least ordinal for which there is no surjection from $X_i$ for any
  $i$.
\end{proof}

\begin{prop}
  \label{prop:cardinalseqs}
  If $\kappa$ is a cardinal number (i.e. an ordinal that is not in
  bijection with any lower ordinal), then one can define surjections
  \begin{enumerate}
  \item \label{sqrt} $\kappa \twoheadrightarrow \kappa \times \kappa$
  \item $\kappa \twoheadrightarrow \kappa^n$ for any $n < \omega$
  \item \label{timesomega}$\kappa \twoheadrightarrow \omega \times
    \kappa$
  \item $\kappa \twoheadrightarrow \sum_{n < \omega} \kappa^n$
  \end{enumerate}
\end{prop}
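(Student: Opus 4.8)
The statement is really one about \emph{infinite} cardinals, so I would assume $\kappa \geq \omega$ at the outset (for finite $\kappa$ these fail -- e.g.\ item~\ref{sqrt} already fails for $\kappa = 2$ -- so the intended hypothesis is that $\kappa$ is infinite). The substance is entirely in item~\ref{sqrt}: once we have a bijection $\kappa \cong \kappa \times \kappa$, the other three items are routine, so that is where I would concentrate the work.

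For item~\ref{sqrt} I would use the \emph{G\"odel pairing}. Define a relation $\prec$ on $\ords \times \ords$ by putting $(\alpha, \beta) \prec (\gamma, \delta)$ exactly when either $\max(\alpha,\beta) < \max(\gamma,\delta)$, or $\max(\alpha,\beta) = \max(\gamma,\delta)$ and $(\alpha,\beta)$ precedes $(\gamma,\delta)$ lexicographically. One checks this is a set-like well-ordering of $\ords \times \ords$, so there is a canonical order isomorphism $\Gamma$ from $\ords \times \ords$ onto $\ords$. The key claim is that $\Gamma$ restricts to a bijection $\kappa \times \kappa \to \kappa$ for every infinite cardinal $\kappa$, which I would prove by induction on $\kappa$. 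Concretely: since $\kappa$ is an infinite cardinal it is a limit ordinal, so for any $(\alpha, \beta) \in \kappa \times \kappa$, setting $\gamma := \max(\alpha,\beta) + 1 < \kappa$, every $\prec$-predecessor of $(\alpha,\beta)$ lies in $\gamma \times \gamma$; by the previous propositions on order types together with the induction hypothesis applied to the cardinal $|\gamma| < \kappa$, the set of these predecessors has order type $< \kappa$. Hence the order type of $(\kappa \times \kappa, \prec)$ is at most $\kappa$, and since $\alpha \mapsto (\alpha, 0)$ embeds $(\kappa, \in)$ into it the order type is exactly $\kappa$; thus $\Gamma \restriction (\kappa \times \kappa)$ is the claimed bijection. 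None of this uses choice, because $\Gamma$ is defined outright.

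With the canonical bijection $G \colon \kappa \to \kappa \times \kappa$ from item~\ref{sqrt} in hand, I would get the statement for $\kappa^n$ by recursion on $n$: for $n = 0$ the unique map $\kappa \to \kappa^0$ is a surjection because $\kappa \neq \emptyset$; for $n = 1$ it is the identity; and for $n \geq 1$, composing $G$ with the product of $\mathrm{id}_\kappa$ and a surjection $\kappa \twoheadrightarrow \kappa^n$ produces one $\kappa \cong \kappa \times \kappa \twoheadrightarrow \kappa \times \kappa^n = \kappa^{n+1}$. Because $G$ is canonical, this recursion is carried out uniformly and yields an honest \emph{set} of surjections $(p_n \colon \kappa \twoheadrightarrow \kappa^n)_{n < \omega}$ indexed by $\omega$, with no appeal to choice -- and it is exactly this uniformity that makes the last item work.

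Finally, for item~\ref{timesomega} the earlier proposition supplies a surjection $\kappa \twoheadrightarrow \omega$ (as $0 < \omega < \kappa$), and composing $G$ with this surjection on the first coordinate gives $\kappa \twoheadrightarrow \omega \times \kappa$. For the statement about $\sum_{n < \omega} \kappa^n$, the family $(p_n)_{n < \omega}$ above assembles into a surjection $\omega \times \kappa \cong \sum_{n < \omega} \kappa \twoheadrightarrow \sum_{n < \omega} \kappa^n$ sending $(n, \xi) \mapsto (n, p_n(\xi))$, and precomposing with the surjection of item~\ref{timesomega} finishes the proof. The one real obstacle is the induction establishing that $\kappa \times \kappa$ has order type exactly $\kappa$ under the G\"odel ordering; everything after that is bookkeeping, the only point needing care being to keep the construction of the $p_n$ uniform so that the indexed family in the last item exists without choice.
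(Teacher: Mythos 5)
Your proposal is correct and takes essentially the paper's approach: the paper likewise reduces everything to item~\ref{sqrt}, which it obtains by citing Kunen's Theorem I.11.30 (exactly the G\"odel-pairing induction you spell out), and leaves items (2) and (4) as routine bookkeeping, where your care about the uniformity of the family $(p_n)$ is the right point to attend to. The only divergence is item~\ref{timesomega}, which the paper proves directly via the decomposition $\alpha = \lambda + (m,n) \mapsto (m, \lambda + n)$ with a pairing function on $\omega$ rather than deriving it from item~\ref{sqrt} as you do; both work, and your remark that the hypothesis must implicitly be that $\kappa$ is infinite is a correct reading of the statement.
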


\begin{proof}
  For \ref{sqrt}, see e.g. \cite[Theorem I.11.30]{kunenfoundations}.

  For \ref{timesomega}, suppose we are given a bijective pairing
  function $(-,-) \colon \omega \times \omega \to \omega$. Any ordinal
  $\alpha$ can be written uniquely as $\alpha = \lambda + (m, n)$
  where $\lambda$ is a limit ordinal and $m, n \in \omega$. We then
  decode this as the pair $(m, \lambda + n)$, which clearly gives a
  bijection.

  Deriving the other parts from these two is straightforward.
\end{proof}

\section{The construction of image preserving $QW$-types}
\label{sec:proof}

We now construct image preserving $QW$-types in $\sets$. This is
based on Jech's construction of the set of hereditarily countable sets
\cite{jechheredctbl}.

\begin{defn}
  We will define a class function $Q$ from ordinals to sets by
  recursion on ordinals.

  We define $Q(0)$ to be $\emptyset$ and for limit ordinals $\lambda$,
  we define $Q(\lambda)$ to be $\bigcup_{\alpha < \lambda}
  Q(\alpha)$.

  We define $Q(\alpha + 1)$ as follows. Let $X$ be the set of pairs
  $(a, f)$ where $a \in A$ and $f : B_a \to Q(\alpha)$. We
  then take $\sim$ to be the equivalence relation on $X$ generated by
  identifying $(a_e, t \circ l_e)$ and $(b_e, t \circ r_e)$ whenever
  $t \colon V_e \to Q(\alpha)$ for $e \in E$ and  we define
  $Q(\alpha + 1)$ to be $X/{\sim}$.
\end{defn}

\begin{rmk}
  For $\beta \leq \alpha$ we have $Q(\beta) \subseteq Q(\alpha)$, by
  exploiting the fact that functions are implemented as relations not
  including explicit reference to their codomain, and noting that for
  $f : B_a \to Q(\alpha)$ and $g : B_{b} \to Q(\alpha)$ such that
  $(a, f) \sim (b, g)$, $f$ factors through the inclusion $Q(\beta)
  \subseteq Q(\alpha)$ if and only if $g$ does.
\end{rmk}

We now give a series of definitions and lemmas that apply at any
stage $\alpha \in \ords$.

\begin{defn}
  Note that we only identify $(a, f)$ and $(b, g)$ when $f$ and $g$
  have the same image in $Q(\alpha)$. Hence we have a well defined
  image function $\im \colon Q(\alpha) \to \powset(Q(\alpha))$, such
  that whenever $x = [(a, f)]$, $\im(x)$ is the image of $f$ in
  $Q(\alpha)$.
\end{defn}

\begin{defn}
  Given an element $x$ of $Q(\alpha)$ of the form $[(a, f)]$, we
  defined the \emph{rank} of $x$, $\rk(x)$ to be the smallest ordinal
  $\beta$ such that $f$ factors through the inclusion
  $Q(\beta) \subseteq Q(\alpha)$. To check this is a well
  defined, note that it depends only on the image of $f$.
\end{defn}

Note that $\rk(x) + 1$ is the smallest ordinal $\beta$ such that
$x \in Q(\beta)$.

\begin{defn}
  Given a set $X \subseteq Q(\alpha)$, we define the union $\cup X$
  by
  \begin{equation*}
    \cup X := \bigcup_{x \in X} \im(x)
  \end{equation*}

  We define the \emph{transitive closure} of $x \in Q(\alpha)$,
  $\tc(x)$ by
  \begin{equation*}
    \tc(x) := \bigcup_{1 \leq n < \omega} {\cup}^n \{x\}
  \end{equation*}
\end{defn}

\begin{lemma}
  \label{lem:rktc}
  For all $x \in Q(\alpha)$, we have the following equation.
  \begin{equation*}
    \rk(x) = \{ \rk(y) \;|\; y \in \tc(x) \}
  \end{equation*}
\end{lemma}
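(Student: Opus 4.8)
The plan is to prove the claimed set equality by establishing the two inclusions separately, using induction on $\rk(x)$ for the harder one. Throughout I will rely on the characterisation recorded just before the lemma: $\rk(x)$ is the least ordinal $\beta$ with $x \in Q(\beta + 1)$, equivalently (since $\im$ of $[(a,f)]$ is the image of $f$) the least $\beta$ with $\im(x) \subseteq Q(\beta)$; and $x \in Q(\beta)$ holds iff $\rk(x) < \beta$. I will also use two elementary bookkeeping facts about $\tc$, both immediate from $\tc(x) = \bigcup_{1 \leq n < \omega} {\cup}^n \{x\}$ and ${\cup}^{n+1}\{x\} = \bigcup_{z \in {\cup}^n\{x\}} \im(z)$: first, $\im(x) \subseteq \tc(x)$ and $\tc(x)$ is closed under $\im$ (if $z \in \tc(x)$ then $\im(z) \subseteq \tc(x)$); and consequently $\tc(y) \subseteq \tc(x)$ whenever $y \in \im(x)$.

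For the inclusion $\{ \rk(y) \mid y \in \tc(x) \} \subseteq \rk(x)$ no induction is needed: it suffices to show $\rk(y) < \rk(x)$ for each $y \in \tc(x)$. Such a $y$ lies in ${\cup}^n\{x\}$ for some $n \geq 1$, so there is a finite chain $x = w_0, w_1, \dots, w_n = y$ with $w_{i+1} \in \im(w_i)$ for each $i$. At each step $\im(w_i) \subseteq Q(\rk(w_i))$ by definition of $\rk$, hence $w_{i+1} \in Q(\rk(w_i))$ and so $\rk(w_{i+1}) < \rk(w_i)$; composing gives $\rk(y) = \rk(w_n) < \rk(w_0) = \rk(x)$.

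For the reverse inclusion $\rk(x) \subseteq \{ \rk(y) \mid y \in \tc(x) \}$ I argue by induction on $\rk(x)$. Fix $\gamma < \rk(x)$. By minimality of $\rk(x)$ we cannot have $\im(x) \subseteq Q(\gamma)$, so some $y \in \im(x)$ satisfies $y \notin Q(\gamma)$, that is $\gamma \leq \rk(y)$; moreover $\rk(y) < \rk(x)$ by the previous paragraph. If $\gamma = \rk(y)$ we are done, since $y \in \im(x) \subseteq \tc(x)$. Otherwise $\gamma < \rk(y)$, and the induction hypothesis applied to $y$ gives $\gamma = \rk(z)$ for some $z \in \tc(y)$; since $y \in \im(x)$ we have $\tc(y) \subseteq \tc(x)$, so $z \in \tc(x)$. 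In either case $\gamma \in \{ \rk(z) \mid z \in \tc(x) \}$, which completes the induction and hence the proof.

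The only mildly delicate points are the bookkeeping around $\tc$ — the passage between membership in ${\cup}^n\{x\}$, finite $\im$-chains, and the containment $\tc(y) \subseteq \tc(x)$ for $y \in \im(x)$ — together with keeping straight the identification of an ordinal $\beta$ with $\{\gamma \mid \gamma < \beta\}$ and the basic equivalence $x \in Q(\beta) \iff \rk(x) < \beta$. None of this is hard, but it is what carries the whole argument.
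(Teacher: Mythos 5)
Your proof is correct and follows essentially the same route as the paper's: the easy inclusion by induction along the $\im$-chains making up ${\cup}^n\{x\}$, and the reverse inclusion by induction on $\rk(x)$, picking $y \in \im(x)$ with $y \notin Q(\gamma)$ and splitting on whether $\gamma = \rk(y)$ or $\gamma < \rk(y)$. The only difference is cosmetic — you phrase things in terms of $\im(x)$ where the paper writes $f\,b$ for a representative $[(a,f)]$ — and your explicit bookkeeping about $\tc$ just spells out what the paper leaves implicit.
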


\begin{proof}
  It is clear that whenever $y \in \tc(x)$ we must have
  $\rk(y) < \rk(x)$ since this is the case for any $n < \omega$ and
  any $y \in \cup^n \{x\}$ by induction on $n$.

  It remains to show that for any $\beta < \rk(x)$, we have
  $\beta = \rk(y)$ for some $y \in \tc(x)$. By the definition of rank,
  $\im(x)$ cannot be contained in $Q(\beta)$. Hence we must have
  $x = [(a, f)]$ and $b \in B_a$ such that $f b \notin Q(\beta)$. For
  this $b$ we have $\beta \leq \rk(f b)$. If $\beta = \rk(f b)$, then
  $f b \in \cup\{x\} \subseteq \tc(x)$ and so $\beta$ is as
  required. Otherwise, $\beta < \rk(f b)$ and so by induction on $\rk(x)$
  we may assume $\beta = \rk(y)$ for some $y \in \tc(f b)$. However,
  $\tc(f b) \subseteq \tc(x)$, so $y \in \tc(x)$ and $\beta$ is again
  as required.
\end{proof}

\begin{defn}
  \label{defn:rns}
  For $x \in Q(\alpha)$, we write $R_n(x)$ for the set $\{\rk(z) \;|\; z
  \in \cup^n\{x\}\}$.
\end{defn}

\begin{lemma}
  \label{lem:rkunionrns}
  For all $x \in Q(\alpha)$,
  \begin{equation*}
    \rk(x) = \bigcup_{1 \leq n < \omega} R_n(x)
  \end{equation*}
\end{lemma}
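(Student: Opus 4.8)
The plan is to derive this immediately from Lemma \ref{lem:rktc} together with the definition of $\tc(x)$. By that lemma, $\rk(x) = \{ \rk(y) \mid y \in \tc(x) \}$, and by definition $\tc(x) = \bigcup_{1 \leq n < \omega} {\cup}^n \{x\}$. So the only work is to push the quantifier ``$y \in \tc(x)$'' through the countable union defining $\tc(x)$.

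The key observation is that forming the set of ranks of the elements of a set commutes with unions: for any family $(S_i)_{i \in I}$ of subsets of $Q(\alpha)$ we have $\{ \rk(y) \mid y \in \bigcup_{i \in I} S_i \} = \bigcup_{i \in I} \{ \rk(y) \mid y \in S_i \}$, since $y$ lies in $\bigcup_i S_i$ exactly when it lies in some $S_i$. Applying this with $S_n = {\cup}^n \{x\}$ for $1 \leq n < \omega$ gives
\[
  \{ \rk(y) \mid y \in \tc(x) \} = \bigcup_{1 \leq n < \omega} \{ \rk(y) \mid y \in {\cup}^n \{x\} \} = \bigcup_{1 \leq n < \omega} R_n(x),
\]
where the last equality is just Definition \ref{defn:rns}. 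Combining this chain of equalities with Lemma \ref{lem:rktc} yields $\rk(x) = \bigcup_{1 \leq n < \omega} R_n(x)$, as required.

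I do not expect any genuine obstacle: the statement is essentially a reindexed restatement of Lemma \ref{lem:rktc}. The only point to be careful about is the index range $1 \leq n < \omega$, which must match on both sides (and does, since $\tc(x)$ is defined using exactly this range), and the fact that $\im$, and hence ${\cup}^n\{x\}$ and $R_n(x)$, are well defined on $Q(\alpha)$, which has already been established.
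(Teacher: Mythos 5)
Your proof is correct and follows exactly the route the paper intends: the paper's own proof is simply ``By lemma \ref{lem:rktc},'' and your argument spells out the routine step of commuting the rank-image operation with the union defining $\tc(x)$.
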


\begin{proof}
  By lemma \ref{lem:rktc}.
\end{proof}

\begin{defn}
  We define $\kappa$ to be $\aleph((B_a)_{a \in A})$. We define
  $\kappa^+$ to be the smallest non zero ordinal for which there is no
  surjection $\kappa \twoheadrightarrow \kappa^+$.
\end{defn}

\begin{lemma}
  \label{lem:rnsurjs}
  We define for each $x \in Q(\alpha)$ and each $1 \leq n < \omega$, a
  surjection
  $F_{x, n} \colon \kappa^n \twoheadrightarrow R_n(x) \cup
  \{\emptyset\}$.
\end{lemma}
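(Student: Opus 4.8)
The plan is to define the surjections $F_{x,n}$ by recursion on $n$, leveraging the fact that $R_n(x)$ is built up from $R_{n-1}$ applied to the elements of $\im(x)$, together with a counting argument controlling the size of $\im(x)$ itself. For the base case $n = 1$, observe that $R_1(x) = \{\rk(z) \mid z \in \im(x)\}$, and that $\im(x)$ is small relative to $(B_a)_{a \in A}$: if $x = [(a,f)]$ then $f \colon B_a \to Q(\alpha)$ surjects onto $\im(x)$, hence there is a surjection $B_a \twoheadrightarrow \im(x)$ and thus $B_a \twoheadrightarrow R_1(x)$. Since $R_1(x)$ is a set of ordinals, it has an order type $\gamma$ with a surjection $B_a \twoheadrightarrow \gamma$, so $\gamma < \aleph((B_a)_{a \in A}) = \kappa$ by the defining property of $\aleph$; composing the order isomorphism $\gamma \cong R_1(x)$ with the canonical surjection $\kappa \twoheadrightarrow \gamma$ (Proposition on surjections between ordinals, adding $\{\emptyset\}$ as the image of $0$) gives $F_{x,1} \colon \kappa \twoheadrightarrow R_1(x) \cup \{\emptyset\}$.

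For the inductive step, suppose $F_{z,n}$ has been defined for all $z \in Q(\alpha)$. Given $x = [(a,f)]$, I would first note the recursive identity $\cup^{n+1}\{x\} = \bigcup_{z \in \im(x)} \cup^n\{z\}$, so that $R_{n+1}(x) = \bigcup_{z \in \im(x)} R_n(z)$. As in the base case there is a surjection $e_x \colon \kappa \twoheadrightarrow \im(x) \cup \{x_0\}$ for some fixed default element — or more carefully, since we only need to enumerate $\im(x)$, a surjection $\kappa \twoheadrightarrow \im(x)$ when $\im(x)$ is nonempty, handling the empty case separately by sending everything to $\emptyset$. Then I would define
\begin{equation*}
  F_{x,n+1}(\xi, \eta) := F_{e_x(\xi),\, n}(\eta)
\end{equation*}
for $(\xi,\eta) \in \kappa \times \kappa^n \cong \kappa^{n+1}$, identifying $\kappa^{n+1}$ with $\kappa \times \kappa^n$ via the canonical surjection $\kappa \twoheadrightarrow \kappa^{n+1}$ from Proposition \ref{prop:cardinalseqs} (or rather a bijection; one can also just work with a surjection $\kappa^{n+1} \twoheadrightarrow \kappa \times \kappa^n$ and precompose). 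Surjectivity onto $R_{n+1}(x) \cup \{\emptyset\}$ is then immediate: any $\beta \in R_{n+1}(x)$ lies in $R_n(z)$ for some $z \in \im(x)$, and $z = e_x(\xi)$ for some $\xi$, and $\beta = F_{z,n}(\eta)$ for some $\eta$ by the inductive hypothesis; the value $\emptyset$ is covered because each $F_{z,n}$ hits $\emptyset$.

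The main subtlety — and the reason the statement is phrased with $R_n(x) \cup \{\emptyset\}$ rather than $R_n(x)$ — is that we must produce these surjections \emph{uniformly and definably}, without any appeal to choice: for each $x$ we need a \emph{canonical} surjection $\kappa \twoheadrightarrow \im(x)$, and the only data we have is the set $\im(x) \subseteq Q(\alpha)$, not a representative $(a,f)$. The fix is that $\im(x)$ being small relative to $(B_a)_{a\in A}$ means there is \emph{some} $a$ and \emph{some} surjection $B_a \twoheadrightarrow \im(x)$; taking the order type of the set of all ordinals that are order types of $\im(x)$ under some well-order, or more simply using the well-orderability of $\kappa$ to canonically well-order any surjective image, we extract a canonical surjection $\kappa \twoheadrightarrow \im(x) \cup \{\emptyset\}$ (throwing in $\emptyset$ precisely to have a canonical target value when we need to pad, and to handle $\im(x) = \emptyset$). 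This is also why $\{\emptyset\}$ must be tracked through the recursion. Everything else — the arithmetic identities $\kappa^{n+1} \cong \kappa \times \kappa^n$, the behaviour of canonical surjections between ordinals — is supplied by the propositions of the previous section, so I would present those verifications briefly and in passing.
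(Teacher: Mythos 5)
Your base case is fine and matches the paper. The inductive step, however, has a genuine gap at exactly the point you flag as ``the main subtlety'': there is in general \emph{no} surjection $e_x \colon \kappa \twoheadrightarrow \im(x)$ in $\zf$, canonical or otherwise. The set $\im(x)$ is a surjective image of $B_a$, and $B_a$ is an arbitrary set; if $B_a$ is not well-orderable then $\im(x)$ need not be well-orderable either, so it need not be the surjective image of \emph{any} ordinal, let alone of $\kappa$. Neither of your proposed fixes repairs this: ``the order type of $\im(x)$ under some well-order'' presupposes a well-order that may not exist (and choosing one for every $x$ and $n$ simultaneously is itself an instance of choice), while ``the well-orderability of $\kappa$'' only well-orders surjective images \emph{of $\kappa$}, which $\im(x)$ is not known to be. The $\aleph$-number device canonically enumerates only \emph{sets of ordinals}, via their order type and the canonical well-order $\in$; the set $\im(x) \subseteq Q(\alpha)$ is not a set of ordinals, so the base-case argument does not transfer to it.

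The paper's proof circumvents precisely this obstruction by reversing the order of your two steps. Rather than enumerating $\im(x)$ and then applying $F_{-,m}$, it first fixes a tuple $\beta_1, \ldots, \beta_m < \kappa$ and forms $Y := \{ F_{z, m}(\beta_1, \ldots, \beta_m) \;|\; z \in \im(x) \}$. This $Y$ \emph{is} a set of ordinals, and it is still a surjective image of $B_a$, hence has order type less than $\kappa$ and therefore a canonical surjection $G \colon \kappa \twoheadrightarrow Y \cup \{\emptyset\}$; one then sets $F_{x, m+1}(\beta_1, \ldots, \beta_m, \beta_{m+1}) := G(\beta_{m+1})$. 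Taking the union over all tuples recovers $R_{m+1}(x) \cup \{\emptyset\}$, using your (correct) identity $R_{m+1}(x) = \bigcup_{z \in \im(x)} R_m(z)$. So your padding by $\{\emptyset\}$ is needed for essentially the reason you give, but the recursion must be threaded through sets of ordinals at every stage and must never pass through an enumeration of $\im(x)$ itself.
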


\begin{proof}
  We first consider the case $n = 1$. Suppose that $x = [(a,
  f)]$. Note that $R_1(x) := \{\rk(f b) \;|\; b \in B_a \}$ is a set
  of ordinals, and so it has an order type $\beta \in \ords$, and in
  particular we have a unique order isomorphism with $\beta$, say
  $\theta \colon \beta \stackrel{\cong}{\to} R_1(x)$. Furthermore, by
  definition, there is clearly a surjection from $B_a$ to $R_1(x)$. It
  follows that $\beta < \kappa$. Hence we can define a canonical
  surjection
  $F_{x, 1} \colon \kappa \twoheadrightarrow R_1(x) \cup
  \{\emptyset\}$ as follows.
  \begin{equation*}
    F_{x, 1}(\alpha) :=
    \begin{cases}
      \theta(\alpha) & \alpha < \beta \\
      \emptyset & \text{otherwise}
    \end{cases}
  \end{equation*}

  Now suppose $n = m + 1$. We fix $m$ ordinals less than $\kappa$, say
  $\beta_1, \ldots, \beta_m$ and consider the set $Y$ below.
  \begin{equation*}
    Y := \{ F_{f b, m}(\beta_1, \ldots, \beta_m) \;|\; b
    \in B_a \}
  \end{equation*}
  This is again a set of ordinals with a surjection from $B_a$ for
  some $a \in A$, and so as before, we have a canonical surjection
  $G \colon \kappa \twoheadrightarrow Y \cup \{ \emptyset \}$. We take
  $F_{x, n}( \beta_1, \ldots, \beta_m, \beta_{m + 1} )$ to be
  $G(\beta_{m + 1})$. We now simultaneously check that $F_{x, n}$ has
  the correct codomain and is surjective.
  \begin{align*}
    \im(F_{x, n})
    &= \bigcup_{\beta_1, \ldots, \beta_m < \kappa} (\{ F_{fb,
      m}( \beta_1, \ldots, \beta_m) \;|\; b \in B_a \} \cup \{
      \emptyset \}) \\
    &= \bigcup_{b \in B_a} \{ F_{fb,
      m}( \beta_1, \ldots, \beta_m ) \;|\;
      \beta_1, \ldots, \beta_m < \kappa \} \; \cup \; \{\emptyset \} \\
    &= \bigcup_{b \in B_a} \{ \rk(z) \;|\; z \in \cup^m\{ f b \} \}
      \; \cup \; \{ \emptyset \} \\
    &= R_n(x) \; \cup \; \{ \emptyset \}
  \end{align*}
\end{proof}

\begin{lemma}
  \label{lem:rkbound}
  For any $x \in Q(\alpha)$ we have $\rk(x) < \kappa^+$.
\end{lemma}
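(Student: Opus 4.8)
The plan is to read off a surjection $\kappa \twoheadrightarrow \rk(x)$ from the two preceding lemmas and then appeal directly to the definition of $\kappa^+$. Before doing so I would record two harmless reductions. If $\rk(x) = 0$ there is nothing to prove, since $\kappa^+ \geq 1$ by definition. And we may assume $\kappa \geq 1$: if $\kappa = \aleph((B_a)_{a \in A}) = 0$ then no $B_a$ is empty, so $Q(0) = \emptyset$ forces $Q$ to be constantly empty and there is no $x \in Q(\alpha)$ to consider. I would also note that $\kappa$ is a cardinal in the sense of Proposition \ref{prop:cardinalseqs}: a bijection of $\kappa$ with a strictly smaller ordinal $\lambda$, composed with a surjection $B_a \twoheadrightarrow \lambda$ (which exists since $\lambda < \kappa$, by the characterisation of $\aleph$), would yield a surjection $B_a \twoheadrightarrow \kappa$, contradicting the minimality of $\kappa$. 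Hence Proposition \ref{prop:cardinalseqs} is available for $\kappa$.

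The main step is to build the surjection onto $\rk(x)$. Fix $x \in Q(\alpha)$ with $\rk(x) > 0$. By Lemma \ref{lem:rnsurjs}, for each $1 \leq n < \omega$ there is a surjection $F_{x,n} \colon \kappa^n \twoheadrightarrow R_n(x) \cup \{\emptyset\}$, and gluing these over $n$ produces a surjection from $\sum_{1 \leq n < \omega} \kappa^n$ onto $\bigcup_{1 \leq n < \omega}(R_n(x) \cup \{\emptyset\})$. Writing $\emptyset = 0$, this union is $\bigl(\bigcup_{1 \leq n < \omega} R_n(x)\bigr) \cup \{0\}$, which by Lemma \ref{lem:rkunionrns} equals $\rk(x) \cup \{0\} = \rk(x)$, since $0 \in \rk(x)$. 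By Proposition \ref{prop:cardinalseqs}(4) there is a surjection $\kappa \twoheadrightarrow \sum_{n < \omega} \kappa^n$, hence also one onto the nonempty set $\sum_{1 \leq n < \omega} \kappa^n$ (collapse the $n = 0$ summand); composing, we obtain a surjection $\kappa \twoheadrightarrow \rk(x)$.

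To finish I would argue by contradiction, assuming $\kappa^+ \leq \rk(x)$. If $\kappa^+ = \rk(x)$ we have a surjection $\kappa \twoheadrightarrow \kappa^+$, contradicting the definition of $\kappa^+$ as the least nonzero ordinal admitting no surjection from $\kappa$. If instead $\kappa^+ < \rk(x)$, then since $0 < \kappa^+ < \rk(x)$ the first proposition of Section \ref{sec:some-usef-prop} converts the surjection $\kappa \twoheadrightarrow \rk(x)$ into a surjection $\kappa \twoheadrightarrow \kappa^+$, the same contradiction. Therefore $\rk(x) < \kappa^+$.

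I do not expect a real obstacle here: all the substantive work has already been done in Lemmas \ref{lem:rktc}, \ref{lem:rkunionrns} and especially \ref{lem:rnsurjs}, and the present lemma is essentially their assembly together with Proposition \ref{prop:cardinalseqs}. The only points that need a little care are the index shift from $n < \omega$ to $1 \leq n < \omega$ and the bookkeeping around the extra $\emptyset$ in the codomains of the maps $F_{x,n}$, both of which are dispatched above.
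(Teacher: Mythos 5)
Your proof is correct and follows essentially the same route as the paper's: reduce to $\rk(x) > 0$, glue the surjections $F_{x,n}$ of Lemma \ref{lem:rnsurjs} using Lemma \ref{lem:rkunionrns} and the $\emptyset$-bookkeeping to get $\sum_{1 \leq n < \omega} \kappa^n \twoheadrightarrow \rk(x)$, then apply Proposition \ref{prop:cardinalseqs} and the definition of $\kappa^+$. The extra checks you include (that $\kappa$ is a cardinal, the $\kappa = 0$ degenerate case, and the explicit contradiction at the end) are correct details the paper leaves implicit.
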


\begin{proof}
  First note that this is clear when $\rk(x) = 0$. Hence we may assume
  for the rest of the proof $\rk(x) > 0$.
  
  By the definition of $\kappa^+$, it suffices to define a surjection
  $\kappa \twoheadrightarrow \rk(x)$. By proposition
  \ref{prop:cardinalseqs} it suffices to define a surjection
  $\sum_{1 \leq n < \omega} \kappa^n \twoheadrightarrow
  \rk(x)$. However, by lemma \ref{lem:rkunionrns} we can express
  $\rk(x)$ as $\bigcup_{1 \leq n < \omega} R_n(x)$. Since
  $\rk(x) > 0$, this is the same as
  $\bigcup_{1 \leq n < \omega} (R_n(x) \cup \{ \emptyset \})$, and so
  we can just combine the surjections defined in lemma
  \ref{lem:rnsurjs}.
\end{proof}

\begin{thm}
  \label{thm:iptypesexist}
  All image preserving $QW$-types exist in $\sets$.
\end{thm}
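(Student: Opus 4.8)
The plan is to show that $Q^{*} := Q(\kappa^{+})$, equipped with the evident algebra structure, is the $QW$-type. The first point is that the hierarchy $\bigl(Q(\alpha)\bigr)_{\alpha \in \ords}$ is constant from stage $\kappa^{+}$ onwards. Indeed, by Lemma~\ref{lem:rkbound} every element of $Q(\kappa^{+}+1)$ has rank strictly below $\kappa^{+}$, hence already lies in $Q(\kappa^{+})$ since $\rk(x)+1$ is the least ordinal $\beta$ with $x \in Q(\beta)$; together with the monotonicity $Q(\beta) \subseteq Q(\alpha)$ for $\beta \le \alpha$ recorded in the remark after the definition of $Q$, an easy transfinite induction (using the colimit clause at limits) gives $Q(\alpha) = Q(\kappa^{+})$ for all $\alpha \ge \kappa^{+}$. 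In particular $Q^{*}$ is a set, and $Q^{*} = Q(\kappa^{+}+1)$ is the quotient $X^{*}/{\sim}$, where $X^{*}$ is the set of pairs $(a,f)$ with $a \in A$ and $f : B_{a} \to Q^{*}$, and $\sim$ is the equivalence relation generated by the identifications $(a_{e}, t \circ l_{e}) \sim (b_{e}, t \circ r_{e})$ for $e \in E$ and $t : V_{e} \to Q^{*}$.

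Next I would equip $Q^{*}$ with the algebra structure $s(a,f) := [(a,f)]$, which is valued in $Q^{*}$ precisely because $Q(\kappa^{+}+1) = Q^{*}$. That $(Q^{*},s)$ satisfies every image preserving equation is then immediate from the definition of $\sim$: for $e \in E$ and $t : V_{e} \to Q^{*}$ the pair $\bigl((a_{e}, t \circ l_{e}),(b_{e}, t \circ r_{e})\bigr)$ is one of the generating identifications, so $s(a_{e}, t \circ l_{e}) = s(b_{e}, t \circ r_{e})$.

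It remains to establish initiality. Given any algebra $(Y,u)$ satisfying all image preserving equations, I would define $h : Q^{*} \to Y$ by recursion on rank, setting $h(x) := u(a, h \circ f)$ for a representative $x = [(a,f)]$; this makes sense because every element of $\im(f)$ has rank strictly below $\rk(x)$. For well-definedness, observe first that a generating identification of $\sim$ relates pairs with the same image, since $\im(l_{e}) = \im(r_{e})$, hence pairs of the same rank, so $\sim$ never connects pairs of distinct rank; it is therefore enough to check, at each rank $\beta$, that the relation ``$u(a, h \circ f) = u(a', h \circ f')$'' on pairs representing rank-$\beta$ elements is an equivalence relation — clear — containing the generating identifications among such pairs. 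The latter is exactly the hypothesis that $(Y,u)$ satisfies the relevant image preserving equation, applied to the map $V_{e} \to Y$ that agrees with $v \mapsto h(t(v))$ on $\im(l_{e}) = \im(r_{e})$ — where $t$ takes values of rank $< \beta$, so $h$ is already defined — and is constant off it; a constant is available because a nonempty $Q^{*}$ forces some $B_{a}$ to be empty and hence $Y$ to be nonempty. Uniqueness of $h$ is the same induction, since any homomorphism $h'$ must satisfy $h'([(a,f)]) = u(a, h' \circ f)$. Thus $(Q^{*},s)$ is initial among algebras satisfying the image preserving equations, i.e.\ the $QW$-type.

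The decisive step has in fact already been taken: it is Lemma~\ref{lem:rkbound}, which bounds every rank occurring in the hierarchy by the single ordinal $\kappa^{+}$, depending only on $(B_{a})_{a \in A}$, and which rests on $\kappa = \aleph\bigl((B_{a})_{a \in A}\bigr)$ together with the cardinal arithmetic of Proposition~\ref{prop:cardinalseqs}. Without such a bound there is no reason for the hierarchy to stabilise, and for general (non image preserving) $QW$-types it need not, as Blass' counterexample shows. The remaining work is bookkeeping: the stabilisation argument above, the well-definedness of the recursive homomorphism, and a handful of degenerate cases (such as $Q^{*}$ or some $B_{a}$ being empty), none of which presents a real difficulty.
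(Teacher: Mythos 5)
Your proposal is correct and follows essentially the same route as the paper: exhibit $Q(\kappa^+)$ as the initial algebra, using Lemma~\ref{lem:rkbound} to see that the algebra structure $[(a,f)]$ lands back in $Q(\kappa^+)$, the generating identifications of $\sim$ to verify the equations, and recursion on rank for initiality. You spell out more carefully the details the paper leaves to the reader (stabilisation of the hierarchy, well-definedness of the recursively defined homomorphism via rank-preservation of $\sim$, and the nonemptiness of $Y$ needed to extend $h \circ t$ off $\im(l_e)$), but the decomposition and the key lemma are the same.
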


\begin{proof}
  We show that $Q(\kappa^+)$ is an initial algebra. We first need to
  show how to define an algebra structure. Suppose we are given $a \in
  A$ and a map $f \colon B_a \to Q(\kappa^+)$. Then $[(a, f)]$ is an
  element of $Q(\kappa^+ + 1)$. By lemma \ref{lem:rkbound} we have
  $\rk([(a, f)]) < \kappa^+$, and so $f$ factors through $Q(\beta)$ for
  some $\beta < \kappa^+$. We can then take $\sup(a, f)$ to be
  $[(a, f)] \in Q(\beta + 1)$.

  We check that this structure respects the equations. Suppose that we
  are given $g \colon V_e \to Q(\kappa^+)$. Note that $g \circ l_e$
  and $g \circ r_e$ have the same image, and so
  $\rk([(a_e, g \circ l_e)]) = \rk([(b_e, g \circ r_e)])$. Hence
  $[(a_e, g \circ l_e)]$ and $[(b_e, g \circ r_e)]$ must
  have been first added at the same stage, $\alpha + 1$. We can now
  see that they were identified in the definition of $Q(\alpha + 1)$.

  Finally, it is clear that for any other algebra structure, we can
  define a unique homomorphism out of $Q(\kappa^+)$ by
  recursion on ordinals.
\end{proof}

\section{The ranks of unordered countably branching trees}
\label{sec:ranks-unord-count}

It follows from Jech's construction that every hereditarily countable
set has rank less than $\omega_2$. In \cite{jechheredctbl} Jech also showed
a converse statement when $\omega_1$ is singular: in this case
there is a hereditarily countable set of rank $\alpha$ for every
$\alpha < \omega_2$. We will show the analogous result for unordered
countably branching trees. We first recall that Jech proved the
following lemma as part of the proof of \cite[Theorem
2]{jechheredctbl}.

\begin{lemma}
  \label{lem:choosecofsets}
  Suppose that $\omega_1$ is singular and let $\alpha < \omega_2$. For
  each fixed surjection $f : \omega_1 \twoheadrightarrow \alpha$ and
  choice of countable cofinal sequence in $\omega_1$, we can construct
  for each limit ordinal $\lambda \leq \alpha$ a choice of countable
  cofinal subset $Y_\lambda \subseteq \lambda$.
\end{lemma}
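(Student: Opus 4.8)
The plan is to convert the two given pieces of data — the surjection $f\colon\omega_1\twoheadrightarrow\alpha$ and a fixed cofinal $\omega$-sequence $(\delta_n)_{n<\omega}$ in $\omega_1$ — into an explicit, choice-free recipe producing a countable cofinal $Y_\lambda\subseteq\lambda$ for each limit $\lambda\le\alpha$. First I would note the structural fact that every limit ordinal $\lambda$ with $\omega\le\lambda\le\alpha$ has cofinality $\omega$: indeed $\operatorname{cf}(\lambda)$ is a regular cardinal with $\operatorname{cf}(\lambda)\le|\lambda|\le\aleph_1$, and it cannot equal $\aleph_1$ since $\operatorname{cf}$ is idempotent and $\omega_1$ is singular. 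So a countable cofinal subset of $\lambda$ exists for abstract reasons; the real content of the lemma is to select one uniformly in $\lambda$ without appealing to choice.

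For a limit $\lambda\le\alpha$ set $A_\lambda:=f^{-1}[\lambda]=\{\xi<\omega_1: f(\xi)<\lambda\}$, so that $f$ restricts to a surjection $A_\lambda\twoheadrightarrow\lambda$. Since each $\delta_n$ is a countable ordinal, each $f[A_\lambda\cap\delta_n]$ is a countable subset of $\lambda$; write $\gamma^\lambda_n:=\sup f[A_\lambda\cap\delta_n]\le\lambda$. I would then split on the following dichotomy. If $\gamma^\lambda_n=\lambda$ for some $n$, take $n_0$ least such and put $Y_\lambda:=f[A_\lambda\cap\delta_{n_0}]$; this is a countable subset of $\lambda$ whose supremum is $\lambda$, hence cofinal. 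Otherwise $\gamma^\lambda_n<\lambda$ for every $n$, and I put $Y_\lambda:=\{\gamma^\lambda_n: n<\omega\}$; this is a countable subset of $\lambda$, and it is cofinal because $\bigcup_n\delta_n=\omega_1\supseteq A_\lambda$ yields $\bigcup_n f[A_\lambda\cap\delta_n]=f[A_\lambda]=\lambda$, so $\sup_n\gamma^\lambda_n=\sup\lambda=\lambda$. Everything here is phrased in terms of the fixed parameters $f$ and $(\delta_n)_n$, and ``least $n$ with $\gamma^\lambda_n=\lambda$'' requires no choice, so this defines the desired family $(Y_\lambda)_\lambda$ by a single definable case distinction — no transfinite recursion is needed.

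The one genuine subtlety, which I expect to be the main thing to get right, is that one cannot merely take the suprema $\gamma^\lambda_n$: because $\operatorname{cf}(\lambda)=\omega$ it can happen that $f[A_\lambda\cap\delta_n]$ is already cofinal in $\lambda$ for some $n$, in which case $\gamma^\lambda_n=\lambda$ and the ``sequence of suprema'' degenerates. The fix is exactly the first branch of the dichotomy: in that situation $f[A_\lambda\cap\delta_{n_0}]$ is itself a countable cofinal subset of $\lambda$, so we use it directly. The remaining verifications — that images and preimages under $f$ and suprema of sets of ordinals behave as claimed — are routine, and I would leave them to the reader in the style of the surrounding section.
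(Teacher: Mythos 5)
Your construction is correct, and the key subtlety — that $\sup f[f^{-1}[\lambda]\cap\delta_n]$ may already equal $\lambda$ for some $n$, so one must branch between taking that image directly and taking the sequence of proper suprema — is exactly the point that needs care; both branches are verified soundly and the whole recipe is definable from $f$ and $(\delta_n)_n$ alone, so no choice is used. Note that the paper itself gives no proof of this lemma: it is recalled from Jech's proof of Theorem 2 in \cite{jechheredctbl}, and your argument is essentially the standard one found there (pull $\lambda$ back along $f$, cut the preimage by the cofinal $\omega$-sequence in $\omega_1$, and push forward), so there is nothing to add beyond the observation that the motivational remark about $\operatorname{cf}(\lambda)=\omega$ is not actually needed for the construction.
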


We can now show the theorem.

\begin{thm}
  \label{thm:ucbtrank}
  Suppose that $\omega_1$ is singular. Then for every $\alpha <
  \omega_2$ there is an unordered countably branching tree of rank
  $\alpha$.
\end{thm}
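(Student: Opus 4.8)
The plan is to build, for each limit ordinal $\lambda \le \alpha$, an unordered countably branching tree $T_\lambda$ whose rank is exactly $\lambda$, together with trees of every rank below (the successor and finite cases being easy: if $T$ has rank $\beta$ then a node all of whose $\omega$-many children equal $T$ has rank $\beta+1$, and by mixing finitely many such one fills in the gaps). The construction of $Q(\kappa^+)$ from Section~\ref{sec:proof} applies with $A = \{\mathtt{leaf}, \node\}$, $B_{\mathtt{leaf}} = \emptyset$, $B_{\node} = \omega$, and all permutation equations; here $\kappa = \aleph((B_a)_{a\in A}) = \omega_1$, so the trees live in $Q(\omega_1^+) = Q(\omega_2)$, and by Lemma~\ref{lem:rkbound} every tree has rank $< \omega_2$, giving the easy direction (that $\omega_2$ is an upper bound). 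The content is the converse: realising every $\alpha < \omega_2$.

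The key tool is Lemma~\ref{lem:choosecofsets}: fixing once and for all a surjection $f : \omega_1 \twoheadrightarrow \alpha$ and a countable cofinal sequence in $\omega_1$, we obtain a coherent choice of countable cofinal subset $Y_\lambda \subseteq \lambda$ for every limit $\lambda \le \alpha$. I would then define $T_\lambda$ by recursion on $\lambda$ so that $\rk(T_\lambda) = \lambda$. At a limit $\lambda$, enumerate $Y_\lambda = \{\gamma_n : n < \omega\}$ (with repetitions allowed so the index set is exactly $\omega$); for each $\gamma_n$, recursion supplies a tree of rank $\gamma_n$ (if $\gamma_n$ is a successor or has a predecessor chain, use the easy cases; if $\gamma_n$ is itself a limit, use $T_{\gamma_n}$, which is available because $\gamma_n < \lambda$). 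Let $\node$ gather these $\omega$-many subtrees. Using Lemma~\ref{lem:rktc}, $\rk(\node(\dots)) = \{\rk(y) : y \in \tc(\node(\dots))\}$; since the immediate subtrees have ranks cofinal in $\lambda$ and all ranks appearing are $< \lambda$, this set of ordinals is exactly $\lambda$, i.e. $\rk(T_\lambda) = \lambda$. One must double-check, via the transitive-closure computation, that no rank $\ge \lambda$ sneaks in — it cannot, since each subtree has rank $< \lambda$ and $\tc$ only descends.

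The main obstacle is well-definedness of the recursion across the quotient: $T_\lambda$ is an equivalence class in $Q(\omega_2)$, and I must ensure the recursive clauses genuinely depend only on the data $(Y_\lambda, \text{and the chosen lower trees})$ and not on incidental encoding choices, and that the coherence of the $Y_\lambda$ from Lemma~\ref{lem:choosecofsets} is actually used to make the assignment $\lambda \mapsto T_\lambda$ a legitimate transfinite recursion rather than requiring fresh choices at each stage (this is exactly the point where singularity of $\omega_1$ — hence the existence of a single $\omega_1 \twoheadrightarrow \alpha$ and a countable cofinal sequence in $\omega_1$ from which everything is derived — is indispensable). A secondary technical point is the rank bookkeeping when $\gamma_n$ is a successor: I should fix in advance, for each $\beta < \omega_2$ that is a successor or finite, a canonical tree $U_\beta$ of rank $\beta$ built from the appropriate $T_{\lambda'}$ ($\lambda'$ the limit part of $\beta$) by the ``self-loop plus filler'' construction, so that the whole family $\{U_\beta : \beta < \omega_2\}$ is defined by one recursion and the limit clause above can refer to it uniformly.
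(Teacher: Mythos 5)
Your overall strategy is the same as the paper's: define $t_\beta$ for $\beta \le \alpha$ by a single transfinite recursion, with $t_0 = \mathtt{leaf}$, $t_{\beta+1} = \node(\lambda n.\,t_\beta)$, and at limits a node whose children realise the cofinal set $Y_\lambda$ supplied by Lemma~\ref{lem:choosecofsets}; the rank computation via Lemma~\ref{lem:rktc} is fine. But the step you explicitly flag as ``the main obstacle'' --- making the limit clause a legitimate choice-free recursion --- is exactly the mathematical content of the proof, and you leave it unresolved. Lemma~\ref{lem:choosecofsets} hands you the \emph{set} $Y_\lambda$, not an enumeration of it, and choosing a surjection $\omega \twoheadrightarrow Y_\lambda$ separately at each of the (possibly uncountably many) limit stages is itself an instance of choice. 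The paper closes this gap with a specific trick: since $Y_\lambda$ is cofinal in a limit ordinal it is infinite, so any surjection $g$ can be converted into a canonical bijection $h$, and then $t_\lambda := \node(\lambda n.\,t_{h(n)})$ is \emph{independent of which bijection is used}, because any two bijections $h, h' \colon \omega \to Y_\lambda$ satisfy $h' = h \circ \pi$ for a permutation $\pi$ of $\omega$, and the $\mathtt{perm}$ equation then identifies the two candidate nodes. Hence no choice is needed: $t_\lambda$ is the common value over all bijections, and the recursion is well defined in $\zf$.

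Your variant of the limit clause --- ``enumerate $Y_\lambda$ with repetitions allowed'' --- actually undermines this repair. Two surjections $\omega \twoheadrightarrow Y_\lambda$ with different multiplicities (say one hitting a given element once and the other twice, agreeing elsewhere) are not related by precomposition with a permutation of $\omega$, so the defining equations of unordered trees do not identify the resulting nodes; the tree obtained genuinely depends on the surjection chosen, and you are back to needing a simultaneous family of choices. You must work with bijections for the independence argument to go through. The secondary bookkeeping you raise (fixing canonical trees of successor rank in advance) is a non-issue: one recursion on all $\beta \le \alpha$ with zero, successor and limit clauses already supplies $t_\gamma$ for every $\gamma < \lambda$ by the time $\lambda$ is reached.
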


\begin{proof}
  Since $\alpha < \omega_2$ and $\omega_1$ is singular there exists a
  choice of sets $Y_\lambda$ as in lemma \ref{lem:choosecofsets}
  for limit ordinals $\lambda \leq \alpha$.

  We construct an unordered countably branching tree $t_\beta$ for
  each $\beta \leq \alpha$ by induction. We define
  $t_0 := \mathtt{leaf}$. For successor ordinals we define
  $t_{\beta + 1} := \node(\lambda n.t_\beta)$. This just
  leaves the case of $t_\lambda$ for non zero limit ordinals
  $\lambda$. Since $Y_\lambda$ is countable there exists a surjection
  $g : \omega \twoheadrightarrow Y_\lambda$. Since $Y_\lambda$ is
  cofinal in a limit ordinal it is not finite. Hence there exists a
  bijection $h : \omega \stackrel{\cong}{\to} Y_\lambda$, which we
  can construct from a choice of surjection $g$ as follows:
  \begin{align*}
    h(0) &= g(0) \\
    h(n + 1) &=
               \begin{cases}
                 g(n + 1) & g(n + 1) \neq h(k) \text{ for all } k \leq
                 n
                 \\
                 \gamma & \text{otherwise},
                 \gamma \text{ least s.t. } \gamma \neq h(k)\text{ for
                   all } k \leq n
               \end{cases}
  \end{align*}

  We show how to define $t_\lambda$ from a choice of bijection $h$,
  and then check that it is independent of the particular choice and
  so well defined.

  We define $t_\lambda := \node(\lambda n.t_{h(n)})$. Now
  suppose that we are given two such bijections
  $h, h' : \omega \stackrel{\cong}{\to} Y_\lambda$. Then
  $\pi := h^{-1} \circ h'$ is a permutation of $\omega$ and
  $h' = h \circ \pi$. Hence $\node(\lambda n.t_{h'(n)})$ and
  $\node(\lambda n.t_{h(n)})$ are identified by one of the
  defining equations of unordered countably branching trees, ensuring
  that $t_\lambda$ is independent of the choice of $h$, as
  required. It is clear that $\rk(t_\beta) = \beta$ for all $\beta
  \leq \alpha$.  
\end{proof}

\begin{cor}
  If $\omega_1$ is singular, then the unique homomorphism
  $h : T \to T_{\sym}$ from countably branching trees to unordered
  countably branching trees is not surjective.
\end{cor}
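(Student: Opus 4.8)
The plan is to argue by a cardinality/rank obstruction. By Theorem~\ref{thm:ucbtrank}, when $\omega_1$ is singular the set of ranks occurring among unordered countably branching trees includes every $\alpha < \omega_2$, so $T_{\sym}$ contains elements of unboundedly large rank below $\omega_2$. On the other hand, ordinary countably branching trees $T$ form the $W$-type for the polynomial with a nullary constructor and an $\omega$-ary constructor, so every element of $T$ is built from $\mathtt{leaf}$ by iterating $\node$ along a well-founded $\omega$-branching tree; a straightforward induction shows $\rk(t) < \omega_1$ for every $t \in T$, since at each stage we take the rank of a countable family and $\omega_1$ is regular-as-a-limit-of-countable-ordinals in the sense that a countable union of ordinals below $\omega_1$ is again below $\omega_1$. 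Actually one must be a little careful here: the relevant notion of rank is the set-theoretic rank of the tree as encoded in $\sets$, and the homomorphism $h\colon T \to T_{\sym}$ need not preserve rank on the nose, so I would instead track the internal rank function on $T_{\sym}$ provided by the construction (the function $\rk$ from the previous sections applied to $Q(\kappa^+)$, or equivalently the ordinal built in the proof of Theorem~\ref{thm:ucbtrank}).

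Concretely, I would first observe that the image $h[T] \subseteq T_{\sym}$ is closed under the algebra operations and contains $\mathtt{leaf}$, and that every element of $h[T]$ of the form $\node(f)$ has $f$ factoring through $h[T]$. Then I would define, for $x \in T_{\sym}$ in the image, an ordinal invariant by $\in$-recursion mirroring the $W$-type structure of $T$, and show by induction on the well-founded structure of a preimage $t \in T$ that this invariant is always a \emph{countable} ordinal: the base case is immediate, and in the inductive step $\node(\lambda n.\,t_n)$ has invariant the supremum of countably many countable ordinals, which is $< \omega_1$. Hence every element of $h[T]$ has invariant below $\omega_1$, whereas the trees $t_\lambda$ constructed in Theorem~\ref{thm:ucbtrank} for $\lambda$ between $\omega_1$ and $\omega_2$ have invariant $\geq \omega_1$ (indeed exactly $\lambda$, by the final sentence of that proof). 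Therefore those $t_\lambda$ are not in the image of $h$, so $h$ is not surjective.

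The main obstacle I expect is making the "rank" bookkeeping precise and consistent: the statement of Theorem~\ref{thm:ucbtrank} uses $\rk$ for the invariant of unordered trees (coming from the $Q(\kappa^+)$ construction), while the natural invariant on $T$ comes from its $W$-type structure, and one needs these to agree — or at least needs the $W$-type invariant to be an upper bound for the $Q$-invariant under $h$, together with the fact that the $Q$-invariant of $t_\lambda$ is genuinely $\geq \omega_1$. The cleanest route is probably to avoid $T$'s intrinsic rank entirely and instead argue that any element of $T$ has a well-founded "unfolding" of countable height, push this through $h$, and show directly that an element of $T_{\sym}$ admitting such a countable-height unfolding must have $\rk < \omega_1$ in the sense of Lemma~\ref{lem:rkbound}'s framework (using that $\rk$ of a node is determined by the ranks of its children via Lemma~\ref{lem:rkunionrns}, and a countable sup of countable ordinals is countable). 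Once that is in place, the contradiction with $\rk(t_{\omega_1}) = \omega_1 \not< \omega_1$ is immediate and the corollary follows.
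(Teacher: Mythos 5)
Your overall strategy (a rank obstruction: show every element of $h[T]$ has rank below $\omega_1$ and invoke Theorem~\ref{thm:ucbtrank} to find unordered trees of larger rank) is the same as the paper's, and the worries you raise about which notion of rank to use are easily dispatched since $h$ preserves rank. But the key step of your argument rests on a principle that is false under the very hypothesis of the corollary. You justify the inductive step by saying that ``a countable union of ordinals below $\omega_1$ is again below $\omega_1$'' and that ``a countable sup of countable ordinals is countable.'' The hypothesis is that $\omega_1$ is \emph{singular}, i.e.\ that $\omega_1$ is the supremum of a countable sequence of countable ordinals; so the supremum of the countably many ordinals $\rk(t_n)+1$ arising at a node could, for all your induction shows, be exactly $\omega_1$. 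In $\zf$ without countable choice, ``countable union of countable sets is countable'' fails in general, and here it fails for the specific objects under discussion. So the induction ``each child has countable rank, hence the node has countable rank'' does not go through.

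The repair is to strengthen the inductive invariant from the mere \emph{fact} of countability to an explicit, uniformly constructed \emph{witness} of it. This is what the paper does: by recursion on the $W$-type $T$ (fixing once and for all a surjection $s : \omega \twoheadrightarrow \omega + (\omega \times \omega)$) one builds for each $t$ a single surjection $f(t) : \omega \to T$ onto the set of all proper subtrees of $t$ together with $\mathtt{leaf}$, combining the enumeration of the children $g(n)$ with the already-constructed enumerations $f(g(n))$. Since by Lemma~\ref{lem:rktc} the rank of $t$ is literally the set of ranks of elements of its transitive closure, $\rk(t)$ is then an ordinal admitting a surjection from $\omega$, hence is less than $\aleph(\omega) = \omega_1$ by definition of the Hartogs--Lindenbaum number --- no choice and no closure of $\omega_1$ under countable suprema is needed. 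Without carrying these explicit enumerations through the recursion, your argument has a genuine gap at exactly the point where the singularity of $\omega_1$ bites.
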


\begin{proof}
  Note that $h$ preserves rank. Hence by theorem \ref{thm:ucbtrank} it
  suffices to show that the rank of every (ordered) countably
  branching tree is less than $\omega_1$.

  Fix a surjective function
  $s : \omega \twoheadrightarrow \omega + (\omega \times \omega)$. We
  construct for each ordered tree $t$ a countable enumeration
  $f(t) : \omega \to T$ whose image is the union of the set of proper
  subtrees of $t$ and $\{\mathtt{leaf}\}$. We define
  $f(\mathtt{leaf})(n) := \mathtt{leaf}$. Now suppose we are given a
  tree of the form $t = \node(g)$. By composing with $s$, it suffices
  for us to define two functions $f_0 : \omega \to T$ and
  $f_1 : \omega \times \omega \to T$ that jointly enumerate all of the
  proper subtrees of $t$ (including $\mathtt{leaf}$). We define
  $f_0(n) := g(n)$ and $f_1(n, m) := f(g(n))(m)$.

  By lemma \ref{lem:rktc} the rank of each $t$ is equal to the set of
  ranks of proper subtrees. However, by the above enumeration this is
  a countable ordinal and so less than $\omega_1$, as required.
\end{proof}

\section{Conclusion}
\label{sec:conclusion}

We have shown that every image preserving $QW$-type exists in $\sets$
under $\zf$ without any additional assumptions. Although the proof is
somewhat elaborate, we can see from the results of section
\ref{sec:ranks-unord-count} that some complication is necessary. The
na\"{i}ve construction of defining an equivalence relation on the
$W$-type of the underlying polynomial and quotienting will not work in
general. As we saw, both hereditarily countable sets and unordered
countably branching trees provide counterexamples when $\omega_1$ is
singular.

\subsection{Generalisations and limitations of this method}
\label{sec:gener-limit-this}

The proof in section \ref{sec:proof} made use of the fact that we have
a well defined image operation sending each element of the form
$(a, f)$ to the image of $f$. This may not be strictly necessary for
the proof, since the key lemma \ref{lem:rnsurjs} only used the sets
$R_n$ in definition \ref{defn:rns} which may be definable in
situations without a well defined image operator. So there could be a
more general version of the theorem that uses the same key
idea. However, it is unclear if there are any interesting examples of
$QW$-type that can be constructed by the same general method, but
whose existence isn't already a corollary of theorem
\ref{thm:iptypesexist}. We therefore leave it as an open problem both
to generalise the theorem and to find interesting examples making
essential use of the generalisation.

On the other hand there are some examples where the argument here
cannot possibly apply. As already discussed in example
\ref{ex:nonexblass}, Blass' counterexample cannot be
constructed by this method, since it does not provably exist under
$\zf$. However, there is also a much simpler example of a $QW$-type
that exists, provably in $\zf$, but does not seem to be covered by the
argument in this paper, namely free monoids. We consider in particular
the free monoid on a set with four elements $\{a, b, c, d\}$. As a
special case of associativity we have $a(b(cd)) = (ab)(cd)$. However,
it is unclear how to define $R_1$ for this term. Since $\rk(a) = 0$
and $\rk(b(cd)) = 2$, we expect $R_1(a(b(cd))) = \{0, 2\}$. Since
$\rk(ab) = \rk(cd) = 1$, we also expect $R_a((ab)(cd)) = \{1 ,
1\}$. Since $R_n$ play an essential role in lemma \ref{lem:rnsurjs} it
seems that it is impossible to construct free monoids by this method,
even though they do provably exist in $\sets$ under $\zf$.

Another class of examples of $QW$-types that exist under $\zf$, but are not
covered directly by the methods of section \ref{sec:proof} are $W$-types with
reductions. By their nature they identify two trees of different rank,
so we don't expect to have a well defined of rank without the
observation in \cite{swanwtypered} that in the presence of the law of
excluded middle they can be viewed as an ordinary $W$-types
of normal forms.

\subsection{Related Open Problems}
\label{sec:relat-open-probl}

The limitations of the result discussed above naturally lead to the
following question.
\begin{openprob}
  Is there a general construction of $QW$-types in $\sets$ under $\zf$
  that naturally encompasses all examples known to exist in this
  setting?
\end{openprob}

Our proof of lemma \ref{lem:rnsurjs} makes essential use of the fact
that any set of ordinals is order isomorphic to an ordinal, which in
turn uses classical logic. This leaves open the case of intuitionistic
logic, specifically the following problem.
\begin{openprob}
  Show all image preserving $QW$-types exist in $\sets$ under the
  assumptions of $\izf$ or find a model of $\izf$ where they do not.
\end{openprob}

Note that the proof of theorem \ref{thm:ucbtrank} made essential use
of the fact that we have equations for all permutations $\pi$ of
$\omega$. It is unclear what happens when we allow some permutations
but not all, leading to the following question.
\begin{openprob}
  We define the $QW$-type of \emph{countably branching weakly
    unordered trees} to have the same underlying polynomial as
  countably branching trees and equations of the form
  $\node(\lambda n.v_n) = \node(\lambda n.v_{\pi(n)})$ where $\pi$ is
  a \emph{finitely supported} permutation of $\omega$. Is it provable
  in $\zf$ that the unique homomorphism from countably branching trees
  to weakly unordered countably branching trees is a surjection?
\end{openprob}

\bibliography{mybib}{}
\bibliographystyle{alpha}

\end{document}